\theoremstyle{plain}
\newtheorem{theorem}{Theorem}[section]
\newtheorem{lemma}[theorem]{Lemma}
\newtheorem{cor}[theorem]{Corollary}
\theoremstyle{definition}
\newtheorem{defn}[theorem]{Definition}
\newtheorem{remark}[theorem]{Remark}
\newcommand{\abs}[1]{\left |{#1}\right |}
\newcommand{\set}[1]{\left\{{#1}\right \}}
\newcommand{\all}[2]{\left \{ {#1}\,:\,{#2} \right \}}
\newcommand{\C}{\mathbb C}
\newcommand{\R}{\mathbb R}
\newcommand{\Z}{\mathbb Z}
\newcommand{\N}{\mathbb N}
\newcommand{\T}{\mathbb T}
\definecolor{mygray}{gray}{0.6}
\begin{document}

\date{24 July 2023}

\title{EDMD for expanding circle maps and their complex perturbations}

\author[O.F.~Bandtlow]{Oscar F.~Bandtlow}

\address{%
Oscar F.~Bandtlow\\
School of Mathematical Sciences\\
Queen Mary University of London\\
London\\
UK.
}
\email{o.bandtlow@qmul.ac.uk}

\author[W.~Just]{Wolfram Just}

\address{%
Wolfram Just\\
Institute of Mathematics\\
University of Rostock\\
Rostock\\
Germany.
}
\email{wolfram.just@uni-rostock.de}

\author[J.~Slipantschuk]{Julia Slipantschuk}

\address{%
Julia Slipantschuk\\
Mathematics Institute\\
University of Warwick\\
Coventry\\
UK.
}
\email{julia.slipantschuk@warwick.ac.uk}

\subjclass[2010]{Primary: 37M25; Secondary: 37C30, 37E10, 30H10}
\keywords{Koopman operator, expanding circle maps, EDMD, Galerkin method, collocation method}

\begin{abstract}
We show that spectral data of the Koopman operator arising from an analytic expanding
circle map $\tau$ can be effectively calculated using an EDMD-type algorithm combining a
collocation
method of order $m$ with a Galerkin method of order $n$. The main result is that if
$m\geq \delta n$, where $\delta$ is an explicitly given positive number quantifying by how much $\tau$ expands concentric annuli containing the unit circle, then
the method converges and approximates the spectrum of the Koopman
operator, taken to be acting on a space of analytic hyperfunctions,
exponentially fast in $n$.
Additionally, these results extend to more general expansive maps on suitable annuli
containing the unit circle.

\end{abstract}

\maketitle

\section{Introduction}

Identifying dynamically relevant signatures and effective degrees of freedom
is among the most challenging and fruitful tasks in science in general.
A specific method, termed dynamic mode decomposition (DMD), 
aims at combining the success of linear data analysis with dynamical systems theory. 
It exploits dynamical signatures which can be traced back
to eigenmodes of an evolution operator defined on a suitable
function space, thereby presenting a key
example for data analysis inspired by abstract operator theory.
While the term ``dynamic mode decomposition'' appears to have been coined in
\cite{ScSe_BAPS08}, it is hard to identify an exact single source for this approach.
Related ideas have been used to reconstruct invariant measures of
dynamical systems \cite{DeJu_JNA99}, and then considerably extended 
to estimate spectral properties of the Koopman operator from data
\cite{Mezi_ND05}. In \cite{WiKeRo_JNS15}, the DMD algorithm was extended 
to cope with a broader range of observations.
While there have been countless applications of DMD and its variants, here we restrict
ourselves to a brief overview of a few of the
major references, see \cite{BiMoMe_Chaos12,WiRoMeKe_EPL15,KKS,Gian_ACHA19,KNPNCS_20,BBKK, LDBK} and references therein.

The main idea of dynamic mode decomposition follows concepts 
developed in the context of statistical data analysis and statistical physics.
In particular, data-driven approaches to identify
empirical modes with linear techniques
can be traced back to \cite{Karh_AASF47}, which has served as a seed for a
wealth of linear decomposition techniques developed in the last
decades. 
Beyond a plain data analysis aspect, the identification of relevant dynamical signatures
adds to the theoretical understanding of motion.
From a mathematical perspective,
the role of phase space structures and, in particular, of the underlying
function spaces have been pointed out in the context of theoretical physics,
namely, as one of the main clues to understand
the emergence of irreversibility in an otherwise time-invariant Hamiltonian
setup \cite{PrMaGeHa_PNAS77}, see also \cite{SaHa_PLA92a} for an
illustration in an elementary setting.
While these works provide some mathematical tools to rigorously underpin
contemporary nonlinear data analysis techniques, identifying relevant dynamical
signatures in general far-from-equilibrium processes remains a substantial challenge,
and developing paradigms to identify effective degrees of freedom in general dynamical
systems continues to be a very active area of research.

We will briefly outline the formal aspects of extended
dynamic mode decomposition (EDMD) for a discrete dynamical system
$z_{t+1}=\tau(z_t)$, see \cite{WiKeRo_JNS15} for more explanations. 
Assuming that the dynamics is observed through
a collection of $N$ scalar functions $\{\psi_0,\ldots,\psi_{N-1}\}$ and
given a sequence of $m$ points in phase space $z^{(0)},\ldots,z^{(m-1)}$,
one constructs two $N\times N$ matrices
\begin{align*}
G_{k,l}&=\frac{1}{m}\sum_{j=0}^{m-1} \psi_k\left(\tau\left(z^{(j)}\right)\right)
\overline{\psi_l\left(z^{(j)}\right)}, \qquad (k,l=0, \dots, N-1), \\
H_{k,l}&=\frac{1}{m}\sum_{j=0}^{m-1} \psi_k\left(z^{(j)}\right)
\overline{\psi_l\left(z^{(j)}\right)}, \quad \quad \qquad (k,l=0, \dots, N-1), 
\end{align*}
and defines the matrix $M$ as
\begin{align}\label{eq:M}
M=G H^{-1}
\end{align}
(see also \eqref{eq:Mdef} in Section \ref{sec:2}). This matrix
provides an optimal finite-dimensional linear approximation to the original
nonlinear system\footnote{To be precise,
the matrix $M = G H^{-1}$ provides a least-squares solution $\arg \min_{M} \| MX - Y \|_2^2$ for $X = [\psi(z_0), \ldots, \psi(z_{m-1})]$
and $Y = [\psi(\tau(z_0)), \ldots, \psi(\tau(z_{m-1}))]$, with $\psi(z) = (\psi_0(z),\ldots, \psi_{N-1}(z))^T$.
This can be seen by noting that such solution is given by $M = Y X^+ = (Y X^H) (X X^H)^{-1} = G H^{-1}$ whenever $X X^H$ is invertible, with $X^+$ and $X^H$ denoting the Moore-Penrose pseudoinverse and conjugate transpose of $X$, respectively.},
and hence its eigendata are related to the time
scales of the original dynamical system.
It can be linked to the
Koopman operator $C_\tau$, the linear composition operator governing the underlying
dynamics given by
\begin{equation}\label{eq:C_tau}
C_\tau f= f \circ \tau
\end{equation}
on some suitable function space. In particular, the EDMD method aims to establish convergence of the
eigenvalues and eigenvectors of $M$ to those of the Koopman operator in the
limit of large number of observables, $N\rightarrow \infty$, and
large number of nodes in phase space, $m\rightarrow \infty$.

Theoretical considerations of EDMD
are often based on a Koopman operator defined on the space of square-integrable
functions, which however raises fundamental technical issues.
In the case of invertible measure-preserving dynamical systems, the Koopman operator defined on $L^2$ turns out to be
unitary, which provides the necessary mathematical machinery for its rigorous study in terms of spectral measures,
see for example \cite[Chapter~1]{walters} and \cite[Chapter~18]{EFHN}
or \cite{CT, KPM, ZZ} in the data-driven context.
With the notable exception of the Hamiltonian setting, however, the assumption of invertibility
appears to be very limiting, as in most applications a well-defined invertible global flow cannot
be established.
Even in cases where the Koopman operator is merely an isometry and can thus be decomposed into a unitary and a shift part by 
means of the Wold decomposition (see, for example, \cite[Chapter~1.3]{RR}),
a study focused solely on the unitary part may miss relevant
dynamical features of the underlying dynamical system.

In the case of dissipative dynamics, one can still define the Koopman operator (\ref{eq:C_tau}) on $L^2$,
and the general spectral theory and properties of the trivial eigenvalue still provide
essential information about ergodic and mixing properties of the system (see, for example, \cite[Chapter~1]{walters}).
However, in this setting the Koopman operator no longer enjoys nice spectral properties for
this choice of function space. 
For a non-invertible discrete dynamical system preserving a mixing probability measure,
the spectrum of the Koopman operator consists of the
entire unit disk (see, for example, \cite[Remark~4.4]{Ke2} or appeal to the Wold decomposition), 
and no nontrivial eigenvalues or eigenfunctions exist,
precluding spectral convergence results or, in fact, any theoretical analysis which assumes the existence
of eigenfunctions in $L^2$.
Nevertheless, for various chaotic dynamical systems with enough regularity, EDMD as given above empirically still yields spectral data which correctly determines the correlation decay rates for sufficiently regular observables.  
This can indeed be rigorously understood by analysing the residual spectrum of the Koopman operator
defined on $L^2$.
For a value $\lambda \in \mathbb{C}$ in the residual spectrum, the range of $\lambda -C_\tau$ is not the entire Hilbert space,
whereas its kernel is trivial as there are no eigenvalues in the residual spectrum. By enriching $L^2$ with ``generalised functions''
 one can make $\lambda -C_\tau$ surjective,
while for some selected values $\lambda \in \mathbb{C}$ the kernel of $\lambda -C_\tau$ becomes
nontrivial. In the particularly nice setting of hyperbolic analytic systems one observes that
the residual spectrum on this larger space vanishes in exchange for the
occurrence of a point spectrum, while the Koopman operator becomes a compact operator, see the discussion in 
\cite{SlBaJu_CNSNS20}.
The corresponding eigenvalues are precisely the decay rates observed in the dynamical
system, while the corresponding ``physical'' eigenfunctions are objects which are not
in $L^2$ (see \cite{F-CGT-Q} for a striking visual demonstration), and EDMD detects this subtle structure. Again we want to stress that any
analysis of the Koopman operator on $L^2$ which assumes the existence of nontrivial eigenfunctions
misses this essential point
and is problematic from a mathematical and physical perspective.

The results in non-equilibrium statistical mechanics alluded to earlier on 
suggest that the Koopman operator is best understood on physically relevant function spaces which guarantee 
compactness or, more generally, quasi-compactness of the operator and yield well-defined
relaxation rates \cite{SuAnTaBa_JMP96}.
Using these ideas, we will clarify in Section \ref{sec:2}
why EDMD is such a successful tool even in the case of chaotic dynamical systems,
restricting our attention to the simplest such systems, one-dimensional expanding
maps.
As this method produces discrete spectra, it is well-suited to approximate 
compact Koopman operators. In fact, as our main result (Theorem \ref{thm:main2}) will show,
EDMD implicitly treats compact Koopman operators on a suitably enriched
function space which contains highly singular objects.
Counterintuitively, it is precisely the presence of these highly singular objects that
is responsible for the physical decay rates detected
via EDMD.
The main novelty of our results is a simple constraint on the two key parameters in EDMD,
the number of observables and the number of nodes (that is, the size of the data set), which
is sufficient to guarantee exponential convergence to the exact spectrum of a compact Koopman operator
on the appropriate Hilbert space.

As the general statement of our main results requires some
background explained in Section \ref{sec:2}, we want to highlight its practical importance
by presenting the following simple corollary of one of our results (Theorem~\ref{thm:main3}), which applies to expanding maps on the circle.
Writing $\tau'_{\mathrm{min}} = \min_{z\in \T}|\tau'(z)|$
(and $\tau'_{\mathrm{max}} = \max_{z\in \T}|\tau'(z)|$), we say $\tau\colon \T \to \T$ is an expanding circle map
on $\T = \{z \in \C\colon |z| = 1\}$ if $\tau'_{\mathrm{min}} > 1$.

\begin{theorem}
\label{thm:intro}
Let $\tau$ be an analytic expanding circle map, $C_\tau$ the associated
Koopman operator given in \eqref{eq:C_tau}, and $\tau'_{\mathrm{min}}, \tau'_{\mathrm{max}}$
the minimal and maximal derivatives of $\tau$ on $\mathbb{T}$. Then the following holds. 
\begin{enumerate}
\item There exists a Hilbert space $\mathcal{H}$ such that $C_\tau$
is a well-defined compact operator from $\mathcal{H}$ to $\mathcal{H}$. In particular, its spectrum
$\operatorname{spec}(C_\tau)$ is either a finite set or a sequence
converging to zero together with zero itself and each non-zero spectral point is an eigenvalue of finite algebraic multiplicity.
\item For $m,n \in \mathbb{N}$
and $\alpha \in \mathbb{R}$ let $\{\psi_l\}_{-(n-1) \leq l \leq (n-1)}$ with
$\psi_l(z) = z^{l}$ be the set of
observables, $\{z^{(j)}\}_{0\leq j \leq m-1}$ with $z^{(j)}=e^{2\pi i j/ m + i\alpha}$ be
the set of phase space points 
and $M=M_n$ the $(2n-1) \times (2n-1)$ matrix given in \eqref{eq:M}.
If for every $n \in \mathbb{N}$, the number of phase space points $m = m(n) \in \mathbb{N}$ is chosen such that
\begin{equation}\label{eq:m2n_cond}
m \geq (\tau'_{\mathrm{min}} + \tau'_{\mathrm{max}}) n,
\end{equation}
then the following hold:
  \begin{enumerate}
  \item Any convergent sequence $(\lambda_n)_{n\in\mathbb{N}}$ with
  $\lambda_n \in \operatorname{spec}(M_n)$ converges to a spectral point of $C_\tau$.
  \item \label{it:b} Conversely, for any $\lambda \in \operatorname{spec}(C_\tau)$ there is a sequence
  $(\lambda_n)_{n\in\mathbb{N}}$ with $\lambda_n \in \operatorname{spec}(M_n)$ such that
  $\lambda_n \to \lambda$ as $n\to \infty$. Moreover,
  \[|\lambda - \lambda_n| = O(e^{-an}) \quad \text{as } n\to \infty,\] for some $a>0$. 
  \item Suppose $\lambda \in \operatorname{spec}(C_\tau)$ is non-zero and $(\lambda_n)_{n\in \N}$ denotes the approximating sequence of eigenvalues of $M_n$ given in (b). If $\xi_n=(\xi_{n,-n+1}, \ldots, \xi_{n,0},\ldots, \xi_{n, n-1})^T \in \mathbb{C}^{2n-1}$ is a generalised eigenvector of the transpose $M^T_n$ of $M_n$, then setting 
  \[ h_n(z)=C\sum_{\abs{k}<n}\xi_{n,k}z^k \quad (z\in \mathbb{C})\,, \]
  where $C$ is a constant chosen so that $\|h_n\|_{\mathcal{H}}=1$,  
  yields a sequence of Laurent polynomials $(h_n)_{n\in \mathbb{N}}$
  with  
  \[ \| \mathcal{P}h_n-h_n\|_{\mathcal{H}} = O(e^{-bn}) \quad \text{as } n\to \infty,\]
where $\mathcal{P}$ is the spectral projection associated to the eigenvalue $\lambda$ of $C_\tau$ and some $b>0$. 
  \end{enumerate}
\end{enumerate}
\end{theorem}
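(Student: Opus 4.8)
The plan is to realise $C_\tau$ as a compact operator on a Hilbert space of analytic hyperfunctions on $\T$, to identify the EDMD matrix $M_n$ as a collocation (quadrature) approximation of the Galerkin matrix of $C_\tau$ in the monomial basis, and then to deduce (a)--(c) from the general collocation--Galerkin convergence result of Section~\ref{sec:2} (Theorem~\ref{thm:main3}) once its hypotheses are verified with the expansion constant specialised to $\tau'_{\mathrm{min}}+\tau'_{\mathrm{max}}$. For part~(1) I would take $\mathcal H=\mathcal H_s:=\{f=\sum_{k\in\Z}a_kz^k:\ \|f\|_{\mathcal H}^2=\sum_k|a_k|^2s^{2|k|}<\infty\}$ for a suitable $s\in(0,1)$; this is a Hilbert space of analytic hyperfunctions on $\T$ (it contains $L^2(\T)$ and genuinely more singular objects) which, under the bilinear $L^2(\T)$ pairing, is the dual of $H^2$ of the genuine annulus $\{s<|z|<1/s\}$. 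Because $\tau$ is analytic and expanding, its inverse branches are uniform holomorphic contractions of a thin annulus about $\T$, so the Perron--Frobenius (transfer) operator $\mathcal L_\tau$ is nuclear on $H^2(\{s<|z|<1/s\})$ for $s$ close enough to $1$, by the classical theory of transfer operators of analytic expanding maps; since $C_\tau$ is the dual operator of $\mathcal L_\tau$ under this pairing, it is compact on $\mathcal H$, and the spectral description in~(1) is then the Riesz--Schauder theorem.

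For part~(2) the first step is bookkeeping. With $\psi_l(z)=z^l$ the monomials $\{z^k\}_{|k|<n}$ are $\mathcal H$-orthogonal, and since $m\ge(\tau'_{\mathrm{min}}+\tau'_{\mathrm{max}})n>2n-1$ the discrete trigonometric system $\{(z^{(j)})^k\}_j$ is $\ell^2$-orthonormal; hence $H$ is the identity and $M_n=GH^{-1}=G$ has entries $(M_n)_{k,l}=\frac1m\sum_{j=0}^{m-1}\tau(z^{(j)})^k(z^{(j)})^{-l}$, which is exactly the $m$-point periodic trapezoidal approximation of the $l$th Laurent coefficient of $\tau^k$, that is, of $\langle C_\tau\psi_k,\psi_l\rangle_{L^2(\T)}$. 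Consequently the transpose $M_n^T$ is, up to this quadrature error, the matrix in the monomial basis of the compression $\Q_n C_\tau\Q_n$ of $C_\tau$ to $V_n:=\operatorname{span}\{z^k:|k|<n\}$, where $\Q_n$ denotes truncation of Laurent series to degrees $<n$; equivalently, $M_n^T$ together with the coordinate identification $\C^{2n-1}\cong V_n$ via the monomial basis defines an operator $\hat M_n:\mathcal H\to\mathcal H$ acting through $\Q_n$, and we are in precisely the collocation--Galerkin situation treated by Theorem~\ref{thm:main3}.

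Two exponential estimates do the work, and pinning down the sharp constant in the second is what I expect to be the main obstacle. First, the Galerkin error: $C_\tau$ maps the unit ball of $\mathcal H=\mathcal H_s$ into a bounded subset of the strictly larger space $\mathcal H_{s'}$, $s<s'<1$ (this reflects that composition with the degree-$d$ map $\tau$, $|d|\ge2$, concentrates Laurent mass near degree $dk$), so the Laurent tails of $C_\tau f$ beyond degree $n$ are geometrically smaller in $\mathcal H$ than those of a generic unit vector, and $C_\tau$ also strongly contracts high-degree monomials; hence $\|(I-\Q_n)C_\tau\|_{\mathcal H}+\|C_\tau(I-\Q_n)\|_{\mathcal H}=O(\theta_1^n)$ for some $\theta_1\in(0,1)$. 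Second, the collocation error: the integrand $z\mapsto\tau(z)^kz^{-l}$ is holomorphic on an annulus about $\T$ whose conformal width is controlled by how much $\tau$ distorts radii near $\T$ — the circle $|z|=1\pm\varepsilon$ being carried onto a curve between radii $1\pm\tau'_{\mathrm{min}}\varepsilon+O(\varepsilon^2)$ and $1\pm\tau'_{\mathrm{max}}\varepsilon+O(\varepsilon^2)$ — and the periodic trapezoidal rule converges geometrically at a rate set by that width; weighing the decay in $m$ against the growth of $\tau^k$ with $k<n$ and against the weights $s^{|k|}$ of $\mathcal H$, the constraint $m\ge(\tau'_{\mathrm{min}}+\tau'_{\mathrm{max}})n$ is what makes the quadrature error $O(\theta_2^n)$ for some $\theta_2\in(0,1)$, the delicate point being to track the competing geometric rates precisely. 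Combining the two gives $\|\hat M_n-C_\tau\|_{\mathcal H}=O(e^{-an})$ with $a>0$.

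Finally, (a)--(c) follow from the spectral perturbation theory of a compact operator approximated in operator norm. Upper semicontinuity of the spectrum yields (a): any limit of eigenvalues of $\hat M_n$ (equivalently of $M_n$) lies in $\operatorname{spec}(C_\tau)$. For (b) with $\lambda\in\operatorname{spec}(C_\tau)$ nonzero, fix a small positively oriented circle $\gamma$ around $\lambda$ enclosing no other spectral point; for large $n$ the Riesz projections $\mathcal P_n=\frac1{2\pi i}\oint_\gamma(z-\hat M_n)^{-1}\,dz$ are nonzero and converge in norm to the spectral projection $\mathcal P$, so $M_n$ has eigenvalues inside $\gamma$, and shrinking $\gamma$ while using the uniform resolvent bound on $\gamma$ turns $\|\hat M_n-C_\tau\|_{\mathcal H}=O(e^{-an})$ into $|\lambda-\lambda_n|=O(e^{-an})$; for $\lambda=0$ one instead notes that $|\det M_n|$ is super-exponentially small in $n$, since the singular values of the compression $M_n$ are bounded by those of the compact operator $C_\tau$ plus the collocation error, which decay exponentially, so that — as at most finitely many of the $2n-1$ eigenvalues of $M_n$ can remain bounded away from $0$ — some eigenvalue of $M_n$ has modulus $O(e^{-an})$. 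For (c): with $\lambda\ne0$ and $\lambda_n\to\lambda$, and $\gamma$, $\mathcal P_n$ as above, $\|\mathcal P_n-\mathcal P\|_{\mathcal H}=O(e^{-bn})$; a generalised eigenvector $\xi_n$ of $M_n^T$ for $\lambda_n$ is, via $\C^{2n-1}\cong V_n$, exactly the coefficient vector of an element of the range of $\mathcal P_n$, so that $\mathcal P_n h_n=h_n$ for large $n$ with $h_n=C\sum_{|k|<n}\xi_{n,k}z^k$ normalised in $\mathcal H$, whence $\|\mathcal P h_n-h_n\|_{\mathcal H}=\|(\mathcal P-\mathcal P_n)h_n\|_{\mathcal H}\le\|\mathcal P-\mathcal P_n\|_{\mathcal H}=O(e^{-bn})$.
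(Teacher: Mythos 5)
Your proposal follows essentially the same route as the paper: the Hilbert space you call $\mathcal H_s$ is exactly the hyperfunction space $H^2(A_s^c)$ of Section~\ref{sec:2}, the identification $H=I$, $M_n=G$, and the intertwining of $M_n^T$ with the collocation--Galerkin operator via the monomial basis is the paper's argument verbatim, the split into a Galerkin truncation error and a trapezoidal (aliasing) quadrature error mirrors Lemmas~\ref{lem3}--\ref{lem5}, your radius-distortion heuristic ($|z|=1\pm\varepsilon$ mapped between $1\pm\tau'_{\mathrm{min}}\varepsilon$ and $1\pm\tau'_{\mathrm{max}}\varepsilon$ up to $O(\varepsilon^2)$) is precisely the content of Lemma~\ref{lem:compexp}, and the final Riesz-projection perturbation argument is what the paper delegates to the cited results of Ahues--Largillier--Limaye (your explicit treatment of $\lambda=0$ and of part (c) via $\mathcal P_n h_n=h_n$ is a nice self-contained substitute). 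The one genuinely different ingredient is part (1): you obtain compactness of $C_\tau$ by duality with a nuclear transfer operator on $H^2$ of the annulus, whereas the paper proves directly, by contour shifting, that the matrix coefficients decay geometrically in both indices (Lemma~\ref{lem:me}, Corollary~\ref{cor:HS}); the paper's route is more self-contained and, importantly, those same coefficient bounds are what feed the Galerkin estimate, so in the paper nothing extra is needed. Two small points to fix: $\mathcal H_{s'}$ with $s'>s$ is a \emph{smaller} (more regular) space than $\mathcal H_s$, not a larger one --- the smoothing statement only gives exponentially small tails with that orientation; and the derivation of the specific threshold $(\tau'_{\mathrm{min}}+\tau'_{\mathrm{max}})n$, which you flag as the main obstacle, is carried out in the paper by optimising $r=\sqrt{r_1r_2}$ and balancing $r_1^m(rr_3)^{-n}$ against $(r_2/r)^n$, then letting $r_1\uparrow 1$ via Lemma~\ref{lem:compexp} --- exactly the bookkeeping your sketch anticipates, so no step of your plan would fail.
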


In short, for analytic expanding maps on the circle, this theorem guarantees exponential convergence
of eigenvalues and eigenvectors of the EDMD matrix in \eqref{eq:M} constructed using
$2n-1$ Laurent polynomials as observables and $m$ equidistant points in the phase space, to those
of the associated Koopman operator, if $m$ is a linear function of $n$ satisfying
\eqref{eq:m2n_cond}.
In the setting of non-invertible maps, it is more common (in the dynamical systems community) to approximate the adjoint of the Koopman operator, known as the Perron-Frobenius or transfer operator, as it can be considered on ordinary function spaces such as spaces of functions of bounded variation, Sobolev spaces, or spaces of analytic functions, see \cite{Ke1, DeJu_JNA99, Ke-Li, BaH} for a small snapshot of a large body of works or \cite{GMNP, BS, W} for some more recent contributions.
Whilst working with the transfer operator requires knowledge of the local inverse branches of $\tau$ and their derivatives,
EDMD only requires knowledge of the map itself, and can be leveraged as an alternative way
to approximate the spectrum of either of the operators.

In order to benchmark our results and the sharpness of our bound in \eqref{eq:m2n_cond} we will resort
in Section \ref{sec:3} to
a class of analytic maps where full spectral information is accessible
\cite{SlBaJu_NONL13,BaJuSl_AIHP17}, and which has been used recently
to clarify some convergence properties of EDMD \cite{SlBaJu_CNSNS20}.
Within this class of chaotic systems we demonstrate that
the rigorous convergence estimates are surprisingly sharp
(see Figure~\ref{fig:fig_nm} in 
Section~\ref{sec:3}), and that EDMD correctly identifies the
spectrum of the compact Koopman operator considered on a space of generalised functions. 

\section{Koopman operator for holomorphically expansive maps on spaces of generalised functions}\label{sec:2}
For $r\in (0,1)$ let $A_r=\all{z\in \C}{r<|z|<r^{-1}}$ denote an
annulus containing the unit circle $\mathbb{T}=\all{z\in \C}{|z|=1}$. Let $\tau$ be an \emph{analytic expanding circle map}, that is, $\tau:\mathbb{T}\to\mathbb{T}$ has an analytic extension to some annulus $A_r$ with $r\in(0,1)$ and $\min_{z\in \mathbb{T}}|\tau'(z)|>1$. We start by showing that the expansivity assumption on the unit circle entails a type of expansivity in the complex plane in the following sense. 
\begin{lemma}\label{lem:compexp}
Let $\tau$ be an analytic expanding circle map. Then there is $\tilde{r}\in (0,1)$, such that the following holds: 
\begin{enumerate}
\item $\tau$ is holomorphic on $A_{\tilde{r}}$. 
\item For every $r_1\in (\tilde{r},1)$ there are radii $r_2$ and $r_3$ with $0<r_3<r_2<r_1<1$ with the following property: 

if $\tau$ is
orientation-preserving then
\begin{equation}\label{eq:tauop1}
\begin{aligned}
r_3^{\hphantom{-1}}<&\abs{\tau(z)}<r_2^{\hphantom{-1}} \quad \text{for any $z\in\C$ with  $|z|=r_1$}, \\
r_2^{-1}<&\abs{\tau(z)}<r_3^{-1}   \quad \text{for any $z\in\C$ with  $|z|=r_1^{-1}$};
\end{aligned}
\end{equation}

while if $\tau$ is orientation-reversing then
\begin{equation}\label{eq:tauor1}
\begin{aligned}
r_2^{-1}<&\abs{\tau(z)}<r_3^{-1} \quad \text{for any $z\in\C$ with  $|z|=r_1$}, \\
r_3^{\hphantom{-1}}<&\abs{\tau(z)}<r_2^{\hphantom{-1}}   \quad \text{for any $z\in\C$ with  $|z|=r_1^{-1}$}. 
\end{aligned}
\end{equation}
\end{enumerate}
Moreover, the above radii $r_1$, $r_2$ and $r_3$ can be chosen so that 
\[ \lim_{r_1\uparrow 1}\frac{\log(r_2)}{\log(r_1)}=\tau'_{\mathrm{min}}
\text{ and }  \lim_{r_1\uparrow 1}\frac{\log(r_3)}{\log(r_1)}=\tau'_{\mathrm{max}}, \]
where $\tau'_{\mathrm{min}} = \min_{z\in \T}|\tau'(z)|$ and $\tau'_{\mathrm{max}} = \max_{z\in \T}|\tau'(z)|$. 
\end{lemma}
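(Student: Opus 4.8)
The plan is to locate the infinitesimal radial expansion rate of $|\tau|$ at the unit circle, identify it with $|\tau'|$ on $\T$, and then integrate. For part~(1): by hypothesis $\tau$ is holomorphic on some $A_{r_0}$, and since $|\tau|\equiv 1$ on $\T$ continuity gives an $r'\in(r_0,1)$ with $\tau\neq 0$ on $\overline{A_{r'}}$; any $\tilde r\in[r',1)$ satisfies~(1), and below we shall only need to push $\tilde r$ closer to $1$ (keeping the fixed reference annulus $A_{r'}$) to make the quantitative estimates valid. For the orientation-reversing case I would apply the orientation-preserving case to $\tilde\tau(z):=\tau(1/z)$, which is again an analytic expanding circle map ($\tilde\tau(\T)=\T$; holomorphic and non-vanishing on the same annulus; $|\tilde\tau'(z)|=|\tau'(1/z)|>1$ for $|z|=1$; degree $\deg\tilde\tau=-\deg\tau>0$, hence orientation-preserving), with the same minimal and maximal derivatives on $\T$ as $\tau$; substituting $w=1/z$ turns~\eqref{eq:tauop1} for $\tilde\tau$ into~\eqref{eq:tauor1} for $\tau$, with the same radii and the same limits. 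So from now on $\tau$ is orientation-preserving.

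Writing $z=e^{t+i\theta}$, set $u(t,\theta):=\log|\tau(e^{t+i\theta})|$, which is real-analytic near $\{t=0\}$ (as $\tau$ is holomorphic and non-vanishing there) with $u(0,\theta)=0$. Differentiating a local holomorphic branch of $\log\tau(e^w)$ in $w=t+i\theta$ gives $\partial_t u(t,\theta)=\operatorname{Re}\bigl(z\tau'(z)/\tau(z)\bigr)$. On $\T$, writing $\tau(e^{i\theta})=e^{i\psi(\theta)}$ for a lift $\psi$ of $\tau|_\T$, the same computation yields $z\tau'(z)/\tau(z)=\psi'(\theta)\in\R$ together with $|\tau'(e^{i\theta})|=|\psi'(\theta)|$, so (using $\psi'>0$ from orientation-preservation) $\partial_t u(0,\theta)=|\tau'(e^{i\theta})|\in[\tau'_{\mathrm{min}},\tau'_{\mathrm{max}}]$. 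Since $u(t_1,\theta)=\int_0^{t_1}\partial_t u(t,\theta)\,dt$ and $\partial_t u$ is uniformly continuous near $\{t=0\}$, the averaged increments $\tfrac{1}{t_1}\int_0^{t_1}\partial_t u(t,\theta)\,dt$ and $\tfrac{1}{-t_1}\int_0^{-t_1}\partial_t u(t,\theta)\,dt$ tend to $|\tau'(e^{i\theta})|$ uniformly in $\theta$ as $t_1\uparrow 0$. Putting $r_1=e^{t_1}$ and
\[
a_1(r_1)=\min_\theta\tfrac{1}{t_1}\int_0^{t_1}\partial_t u(t,\theta)\,dt,\qquad a_2(r_1)=\min_\theta\tfrac{1}{-t_1}\int_0^{-t_1}\partial_t u(t,\theta)\,dt,
\]
with $b_1(r_1),b_2(r_1)$ the corresponding maxima over $\theta$, we obtain $a_1(r_1),a_2(r_1)\to\tau'_{\mathrm{min}}$ and $b_1(r_1),b_2(r_1)\to\tau'_{\mathrm{max}}$ as $r_1\uparrow 1$, and $a_j(r_1)\le b_j(r_1)$ always.

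Now, for $r_1\in(\tilde r,1)$ with $\tilde r$ taken close enough to $1$ that $\min(a_1(r_1),a_2(r_1))>2-r_1$, I would set
\[
r_2:=r_1^{\,\min(a_1(r_1),a_2(r_1))-(1-r_1)},\qquad r_3:=r_1^{\,\max(b_1(r_1),b_2(r_1))+(1-r_1)}.
\]
For $|z|=r_1$ one has $\log|\tau(z)|=t_1 c$ with $c\in[a_1(r_1),b_1(r_1)]$, and for $|z|=r_1^{-1}$ one has $\log|\tau(z)|=(-t_1)c'$ with $c'\in[a_2(r_1),b_2(r_1)]$; since $t_1<0$, a short sign-chase against the exponents defining $r_2$ and $r_3$ yields precisely the four strict inequalities in~\eqref{eq:tauop1}. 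Moreover the exponents satisfy $1<\min(a_1,a_2)-(1-r_1)<\min(a_1,a_2)\le\max(b_1,b_2)<\max(b_1,b_2)+(1-r_1)$, so (as $0<r_1<1$) $0<r_3<r_2<r_1<1$; and $\log r_2/\log r_1=\min(a_1,a_2)-(1-r_1)\to\tau'_{\mathrm{min}}$ while $\log r_3/\log r_1=\max(b_1,b_2)+(1-r_1)\to\tau'_{\mathrm{max}}$, which is the claimed asymptotics.

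The conceptual heart — and the only place the expansivity hypothesis $|\tau'|>1$ on $\T$ is used — is the identity $\partial_t\log|\tau(e^{t+i\theta})|\big|_{t=0}=|\tau'(e^{i\theta})|$: the modulus of $\tau$ grows radially across $\T$ at exactly the rate at which $\tau$ expands angularly along $\T$. The rest is bookkeeping, and the only genuinely fiddly step is the last one, where a single explicit choice of $r_2,r_3$ must simultaneously realise all four strict inclusions, the ordering $r_3<r_2<r_1<1$, and the two limits; the small margins $\pm(1-r_1)$ are inserted precisely to reconcile ``strict inequality for each fixed $r_1$'' with ``asymptotically sharp exponent''.
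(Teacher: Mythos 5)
Your proof is correct and follows essentially the same route as the paper: both identify the radial derivative of $\log|\tau(e^{t+i\theta})|$ at $t=0$ with $|\tau'|$ on $\T$ (the paper's $f'$), integrate radially across the circle, and take $r_2,r_3$ to be powers of $r_1$ whose exponents are slightly relaxed from $\tau'_{\mathrm{min}},\tau'_{\mathrm{max}}$ so that the strict inequalities and the stated limits hold simultaneously. Your only departures are cosmetic: you control the error via uniform continuity of the radial averages where the paper uses an explicit $O((1-r)^2)$ remainder with a margin $\epsilon(r)\sim\sqrt{1-r}$, and you settle the orientation-reversing case by the inversion $z\mapsto 1/z$, which the paper leaves as ``similar''.
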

 \begin{proof}
 Note that the unit circle $\mathbb{T}$ is an invariant set for the map $\tau$. Then setting 
$\tau(\exp(i t))=\exp(i f(t))$ for $t \in \mathbb{R}$ defines an analytic map $f:\mathbb{R}\to \mathbb{R}$ with
\begin{equation*}
f'(t)=\frac{\tau'(e^{it})}{\tau(e^{it})} e^{it} \, ,
\end{equation*}
see, for example, \cite[Lemma 5.1]{BN}.
We shall only consider the case of orientation-preserving $\tau$ here, which implies  $f'(t)>0$ for $t\in \mathbb{R}$. The orientation-reversing case is similar.  

For later use, we note that setting 
$f'_{\mathrm{min}}=\min_{t\in\mathbb{R}}f'(t)$ and 
$f'_{\mathrm{max}}=\max_{t\in\mathbb{R}}f'(t)$ we have 
$f'_{\mathrm{min}}=\tau'_{\mathrm{min}}$ and $f'_{\mathrm{max}}=\tau'_{\mathrm{max}}$. 

For $z = \exp(i t)$ and $w = r \exp(i t)$ with $\tau$ analytic and non-zero on the line-segment $[z,w]$, we have
\begin{equation*}
\log \tau(z)-\log \tau(w) = \int_{w}^z \frac{\tau'(\zeta)}{\tau(\zeta)} d \zeta
\end{equation*}
so that
\begin{equation*}
\log|\tau(e^{it})| - \log|\tau(r e^{it})| = \operatorname{Re}
\int_{r}^1 \frac{\tau'(\rho e^{it})}{\tau(\rho e^{it})} e^{it} d\rho \, .
\end{equation*}
Since $|\tau(\exp(it))|=1$ we have
\begin{equation}
\label{eq:tauest}
\begin{aligned}
-\log|\tau(r e^{it})| &= \int_{r}^1 f'(t) d\rho+
\operatorname{Re} \int_{r}^1 \left(\frac{\tau'(\rho e^{it})}{\tau(\rho e^{it})} e^{it} - f'(t) \right) d\rho \\
&= (1-r) f'(t) + R(r,t)  \, .
\end{aligned}
\end{equation}
Since the integrand of the second integral is analytic and vanishes at the upper limit, there is a constant $C>0$, such that 
for all  real $r$ sufficiently close to $1$ we have 
\begin{equation}
\label{eq:Rrtest}
\sup_{t\in \mathbb{R}}|R(r,t)|\leq C(1-r)^2\, . 
\end{equation}
Now fix a function $\epsilon:(0,\infty)\to (0,\infty)$ with 
\begin{equation}
\label{eq:eps1}
 0<\epsilon(r)<f'_{\mathrm{min}}-1 \quad (\forall r\in (0,1))
 \end{equation}
such that 
 \begin{equation}
 \label{eq:eps2} 
 \lim_{r\to 1}\epsilon(r)=0 \text{ and } \lim_{r\to 1} \frac{\epsilon(r)}{|1-r|}=\infty\,.
 \end{equation}
 To be definite, we could choose 
 \[ \epsilon(r)=\frac{1}{2}(f'_{\mathrm{min}}-1)\sqrt{|1-r|}\,.\]
Using (\ref{eq:Rrtest}), the fact that $\lim_{r\to 1}\log(r)/(r-1)=1$ and the properties (\ref{eq:eps1}) and (\ref{eq:eps2}) 
of $\epsilon$ we see that there is $\tilde{r}\in (0,1)$ with $\tau$ analytic on $A_{\tilde{r}}$ such that 
\begin{equation}
\label{eq:Rrtest2}
\left | \frac{R(r,t)}{1-r} \right |< \epsilon(r) \quad (\forall r\in (\tilde{r},1)\cup (1,\tilde{r}^{-1}))
\end{equation} 
and 
\begin{equation} 
\label{eq:minest}
f'_{\mathrm{min}} - \epsilon(r) > \frac{-\log(r)}{1-r}\quad (\forall r\in (\tilde{r},1))\,.
\end{equation} 
For $r\in(\tilde{r},1)$ we now define radii 
\[ r_2(r) = \exp(-(1-r)(f'_{\mathrm{min}} -\epsilon(r)))\,, \]
\[ r_3(r) = \exp(-(r^{-1}-1)(f'_{\mathrm{max}} +\epsilon(r)))\,. \]
We shall now show that these radii have the desired properties, that is, for every $r\in (\tilde{r}, 1)$ and every $t\in \mathbb{R}$ we have 
\begin{equation}
\label{eq:first}
r_3(r)<r_2(r)<r\,,
\end{equation}
\begin{equation}
\label{eq:second}
r_3(r)<|\tau(re^{it})| < r_2(r)\,,
\end{equation}
\begin{equation} 
\label{eq:third}
(r_2(r))^{-1}<|\tau(r^{-1}e^{it})| < (r_3(r))^{-1}\,.
\end{equation}
We start by observing that (\ref{eq:first}) follows from (\ref{eq:minest}) and the fact that $1-r< r^{-1}-1$ for $r\in (0,1)$. 
Next note that (\ref{eq:Rrtest2}) implies that for $r\in (\tilde{r},1)$ we have 
\[ (f'_{\mathrm{min}}-\epsilon(r) )<f'(t)+\frac{R(r,t)}{1-r} < \frac{r^{-1}-1}{1-r} (f'_{\mathrm{max}}+\epsilon(r) ) \]
and 
\[ \frac{1-r}{r^{-1}-1}(f'_{\mathrm{min}}-\epsilon(r) )<f'(t)-\frac{R(r^{-1},t)}{r^{-1}-1} < (f'_{\mathrm{max}}+\epsilon(r) ) \]
which together with (\ref{eq:tauest}) imply (\ref{eq:second}) and (\ref{eq:third}).  

Finally, using the definition of the radii $r_2$ and $r_3$ it is not difficult to see that 
 \[ \lim_{r\uparrow1} \frac{\log(r_2(r))}{\log(r)} = f'_{\mathrm{min}} 
 \text{ and }
 \lim_{r\uparrow1} \frac{\log(r_3(r))}{\log(r)} = f'_{\mathrm{max}}\,,\] 
 which finishes the proof. 
 \end{proof}

It turns out that our convergence results hold for a larger class of maps defined below, which do not need to preserve the unit circle. 

\begin{defn} Given radii $0<r_3<r_2<r_1<1$ we say that $\tau:A_{r_1}\to \mathbb{C}$ is \emph{holomorphically $(r_2,r_3)$-expansive on $A_{r_1}$} (or \emph{holomorphically expansive on $A_{r_1}$}, for short) if $\tau$ is holomorphic on the closure of $A_{r_1}$ and either 
\begin{equation}\label{eq:tauop}
\begin{aligned}
r_3^{\hphantom{-1}}<&\abs{\tau(z)}<r_2^{\hphantom{-1}} \quad \text{for any $z\in\C$ with  $|z|=r_1$}, \\
r_2^{-1}<&\abs{\tau(z)}<r_3^{-1}   \quad \text{for any $z\in\C$ with  $|z|=r_1^{-1}$};
\end{aligned}
\end{equation}
or 
\begin{equation}\label{eq:tauor}
\begin{aligned}
r_2^{-1}<&\abs{\tau(z)}<r_3^{-1} \quad \text{for any $z\in\C$ with  $|z|=r_1$}, \\
r_3^{\hphantom{-1}}<&\abs{\tau(z)}<r_2^{\hphantom{-1}}   \quad \text{for any $z\in\C$ with  $|z|=r_1^{-1}$}. 
\end{aligned}
\end{equation}
hold. 
\end{defn}

For an illustration of the role played by the radii $r_1$, $r_2$ and $r_3$ see Figure~\ref{fig:fig_rs}. 

\begin{figure}
  \includegraphics[width=0.5\textwidth]{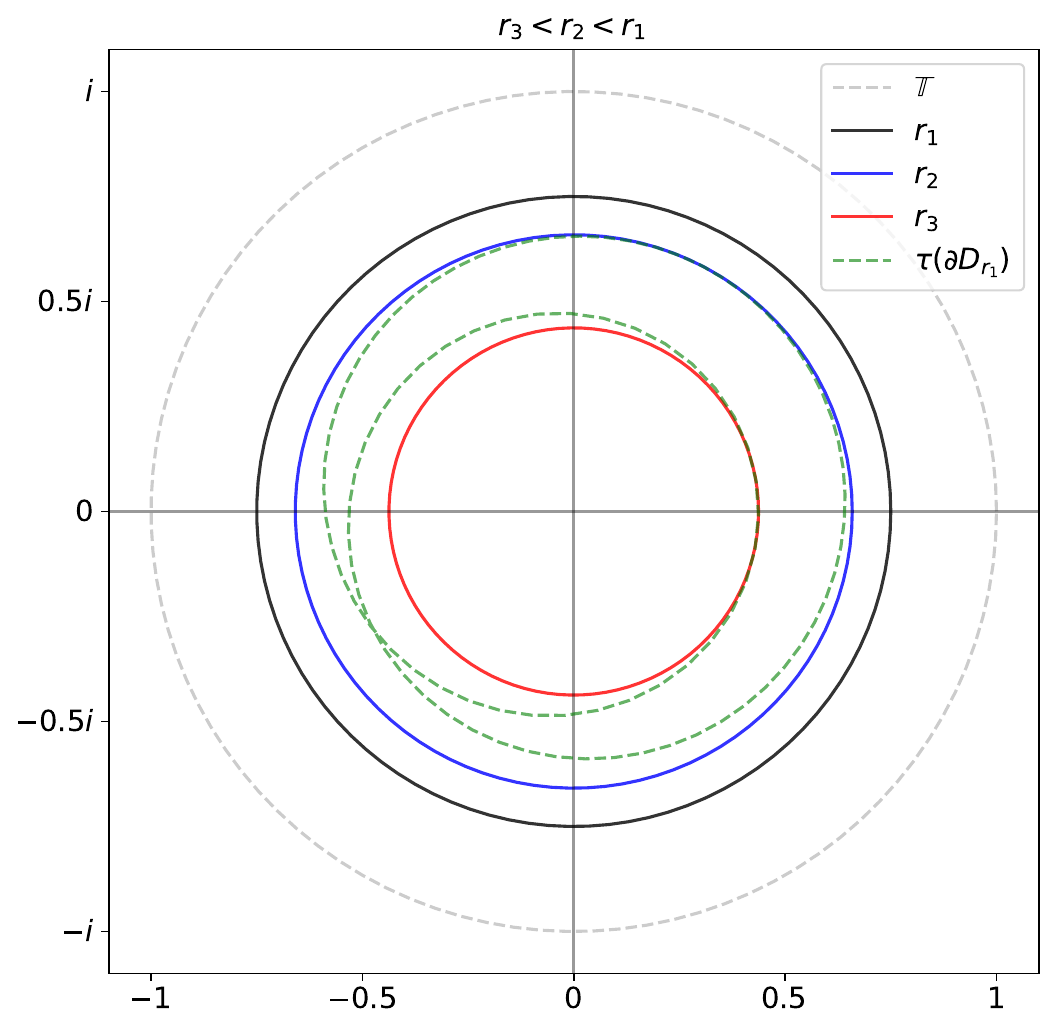}
  \caption{Illustration of radii $r_3 < r_2 < r_1$ for a holomorphically expansive $\tau$ satisfying (\ref{eq:tauop}).}
  \label{fig:fig_rs}
\end{figure}

\begin{remark} \mbox{ } 
\begin{enumerate}
\item 
Note that in order for a $\tau$ which is holomorphic on the closure of an annulus $A_{r_1}$ to be holomorphically expansive on $A_{r_1}$ it suffices that 

either 
\[ \sup_{t\in \mathbb{R}}|\tau(r_1e^{it})|<r_1 \text{ and } r_1^{-1}< \inf_{t\in \mathbb{R}}|\tau(r_1^{-1}e^{it})| \]

or 
\[ \sup_{t\in \mathbb{R}}|\tau(r_1^{-1}e^{it})|<r_1 \text{ and } r_1^{-1}< \inf_{t\in \mathbb{R}}|\tau(r_1e^{it})|\,. \]
\item By Lemma~\ref{lem:compexp} every analytic expanding circle map is holomorphically expansive on any $A_r$ with $r$ sufficiently close to $1$.  
\end{enumerate}
\end{remark}

Our next task will be to define a class of Hilbert spaces on which the Koopman operator of a holomorphically expansive map has good spectral properties. We start by recalling the definition of the Hardy-Hilbert space $H^2(A_r)$ with $r\in (0,1)$ which consists of those functions $f$ holomorphic on the annulus $A_r$ for which 
\[ \sup_{\rho\in (r,1]}\left ( \frac{1}{2\pi}\int_0^{2\pi} |f( \rho e^{it} )|^2\,dt + \frac{1}{2\pi} \int_0^{2\pi} |f( \rho^{-1} e^{it} )|^2\,dt \right ) < \infty\,. \]
It turns out that $H^2(A_r)$ is a Hilbert space with scalar product 
\begin{equation*}
 (f,g)_{H^2(A_r)} =
\frac{1}{2\pi} \int_0^{2\pi} f^*(re^{it})\,\overline{g^*(re^{it})}\,dt +
\frac{1}{2\pi} \int_0^{2\pi} f^*(r^{-1}e^{it})\,\overline{g^*(r^{-1}e^{it})}\,dt\,,
\end{equation*}
where $f^*$ and $g^*$ denote the respective non-tangential boundary values of $f$ and
$g$. More details on this construction can be found in \cite{BaJuSl_AIHP17} or \cite{Sarason1965}. 
For later use we note that $(e_n)_{n\in \mathbb{Z}}$ with
\begin{equation}
\label{eq:eONB}
e_n(z)=\frac{z^n}{\sqrt{r^{2n}+r^{-2n}}} \quad (n\in \mathbb{Z})
\end{equation}
 is an orthonormal basis for $H^2(A_r)$. We shall use this space later on to control the quadrature error inherent in EDMD. 
 
 The space on which we shall study the Koopman operator is denoted by $H^2(A_r^c)$ and is constructed as follows. 
 Let $L^2(\mathbb{T})$ denote the Hilbert space of square-integrable functions on the unit circle. Given $f\in L^2(\mathbb{T})$ we write 
 \[ c_n(f) = \frac{1}{2\pi}\int_0^{2\pi}f(e^{it})e^{-int}\,dt \quad (n\in \mathbb{Z}) \]
 for the Fourier coefficients of $f$. 
 
 Let $r\in (0,1)$ and let $L$ denote the set of all finite linear combinations of positive and negative powers of $z$, that is, $L$ is the set of Laurent polynomials in $z$.  We construct a norm $\|\cdot\|_{H^2(A_r^c)}$ on $L$ as follows 
 \[  \|f\|_{H^2(A_r^c)} = \sqrt{\sum_{n\in \mathbb{Z}} |c_n(f)|^2r^{2|n|}}\,, \]
 and define $H^2(A_r^c)$ to be the completion of $L$ with respect to this norm. It turns out that 
 $H^2(A_r^c)$ is a Hilbert space with scalar product 
 \[ (f,g)_{H^2(A_r^c)} = \sum_{n\in \mathbb{Z}} c_n(f) \overline{c_n(g)}r^{2|n|}\,,\]
and orthonormal basis 
$(e^c_n)_{n\in \mathbb{Z}}$ where
\begin{equation}
\label{eq:ecONB} 
e^c_n(z)=r^{-\abs{n}}z^n \quad (n\in \mathbb{Z})\,.
\end{equation}
The space $H^2(A_r^c)$ is quite large. It contains $L^2(\mathbb{T})$, but also all distributions, and other highly singular objects, such as hyperfunctions. The reason for the notation, which looks peculiar at first sight, stems from the fact that $H^2(A_r^c)$ can be identified with the Hardy-Hilbert space of functions holomorphic on the complement $A_r^c$ of $A_r$ in the Riemann sphere 
$\hat{\mathbb{C}}$, see, for example, \cite{BaJuSl_AIHP17}. 

We shall now show that the  Koopman or composition operator $C_\tau\colon f\mapsto f\circ \tau$ is a well-defined compact operator on $H^2(A_r^c)$. We start with the following crucial lemma. 

\begin{lemma}
\label{lem:me}
Let $\tau$ be holomorphically $(r_2,r_3)$-expansive on $A_{r_1}$ and let $r\in(r_2,r_1)$.Then the following holds: 
\[ C_\tau e^c_k \in H^2(A_r^c) \quad (k\in \mathbb{Z})\,, \]
\[  \abs{(C_\tau e^c_k,e^c_l)_{H^2(A_r^c)}} \leq
\left ( \frac{r_2}{r\vphantom{r_1}} \right )^{\abs{k}}
\left ( \frac{r}{r_1} \right )^{\abs{l}} \quad (k,l\in \mathbb{Z})\,,\]
where $(e^c_n)_{n\in \mathbb{Z}}$ is the orthonormal basis for $H^2(A_r^c)$ given in (\ref{eq:ecONB}). 
\end{lemma}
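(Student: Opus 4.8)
The plan is to reduce both assertions to a single estimate on Laurent coefficients. Write $C_\tau e^c_k = e^c_k\circ\tau = r^{-\abs{k}}\tau^k$; since $\tau$ is holomorphic and non-vanishing in a neighbourhood of $\T$ this is a function holomorphic near $\T$, with Laurent coefficients $r^{-\abs{k}}c_l(\tau^k)$, where $c_l(g)$ denotes the $l$-th Laurent (equivalently Fourier) coefficient of $g$ about $\T$. I claim everything follows from the single bound
\begin{equation}\label{eq:lcbound}
\abs{c_l(\tau^k)}\le r_2^{\abs{k}}\,r_1^{-\abs{l}}\qquad(k,l\in\Z)\,.
\end{equation}
Indeed, since $r<r_1$, \eqref{eq:lcbound} gives $\sum_l\abs{r^{-\abs{k}}c_l(\tau^k)}^2 r^{2\abs{l}}\le(r_2/r)^{2\abs{k}}\sum_l(r/r_1)^{2\abs{l}}<\infty$, so the Laurent coefficients of $C_\tau e^c_k$ are square-summable against the weight $r^{2\abs{l}}$ and hence $C_\tau e^c_k\in H^2(A_r^c)$; then, using $c_n(e^c_l)=r^{-\abs{l}}\delta_{nl}$ from \eqref{eq:ecONB} and the formula for the scalar product, $(C_\tau e^c_k,e^c_l)_{H^2(A_r^c)} = r^{\abs{l}-\abs{k}}c_l(\tau^k)$, whose modulus is at most $(r_2/r)^{\abs{k}}(r/r_1)^{\abs{l}}$ by \eqref{eq:lcbound} again.

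To prove \eqref{eq:lcbound}, I would first take the orientation-preserving case \eqref{eq:tauop} with $k\ge 0$. Then $\tau^k$ is holomorphic on the closed annulus $\overline{A_{r_1}}$, so $c_l(\tau^k)$ equals the Cauchy integral $\tfrac{1}{2\pi i}\oint_{\abs{z}=\rho}\tau(z)^k z^{-l-1}\,dz$ for any $\rho\in[r_1,r_1^{-1}]$; choosing $\rho=r_1$ and using $\abs{\tau}<r_2$ on $\abs{z}=r_1$ gives $\abs{c_l(\tau^k)}\le r_2^k r_1^{-l}\le r_2^{\abs{k}}r_1^{-\abs{l}}$, the last step because $r_1<1$ (so $r_1^{-l}\le r_1^{-\abs{l}}$ for every $l$). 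For $k<0$ I would pass to the map $\sigma(z):=1/\tau(1/z)$: the substitution $z\mapsto 1/z$ interchanges the two boundary circles of $A_{r_1}$, so \eqref{eq:tauop} for $\tau$ becomes \eqref{eq:tauop} for $\sigma$, whence $\sigma$ is again holomorphically $(r_2,r_3)$-expansive on $A_{r_1}$ and in particular holomorphic on $\overline{A_{r_1}}$; then, writing $k=-m$ with $m>0$, the identity $c_l(\tau^{-m})=c_{-l}(\sigma^m)$ combined with the already treated case applied to $\sigma$ yields $\abs{c_l(\tau^{-m})}\le r_2^m r_1^{-\abs{l}}=r_2^{\abs{k}}r_1^{-\abs{l}}$. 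The orientation-reversing case \eqref{eq:tauor} is handled identically, with $\abs{z}=r_1$ and $\abs{z}=r_1^{-1}$ interchanged (these being the circles on which $\abs{\tau}<r_2$, resp.\ $\abs{\tau}>r_2^{-1}$).

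The step I expect to need the most care is the $k<0$ case — equivalently, the holomorphy of $\sigma=1/\tau(1/\cdot)$ on $\overline{A_{r_1}}$. This hinges on $\tau$ being zero-free on $\overline{A_{r_1}}$, so that $\tau^k=1/\tau^{\abs{k}}$ is holomorphic there and the Cauchy integral for $c_l(\tau^k)$ may legitimately be deformed to the circle on which $\abs{\tau}$ is large. For an analytic expanding circle map this is immediate, since $\abs{\tau}\equiv 1$ on $\T$ forces $\tau$ to be zero-free on $A_{\tilde r}$ for $\tilde r$ close enough to $1$, which is the regime in which the lemma will be applied; for a general holomorphically expansive $\tau$ one must use the expansivity conditions \eqref{eq:tauop} and \eqref{eq:tauor} to constrain the location of any zeros of $\tau$ inside $A_{r_1}$ away from the relevant boundary circle, so that the Laurent expansion of $\tau^k$ about $\T$ still extends out to that circle. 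Once this is settled, \eqref{eq:lcbound} is a one-line Cauchy estimate in each remaining case, and the rest of the lemma is the bookkeeping carried out in the first paragraph.
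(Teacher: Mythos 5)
Your argument is correct in substance and follows essentially the same route as the paper's proof: reduce everything to the coefficient bound $|c_l(\tau^k)|\le r_2^{|k|}r_1^{-|l|}$ via a contour-integral representation of the Fourier coefficients, estimate the integrand on the boundary circle of $A_{r_1}$ on which $|\tau(z)^k|\le r_2^{|k|}$, and then carry out exactly the bookkeeping in your first paragraph to obtain membership in $H^2(A_r^c)$ and the matrix-element bound. The one genuine difference is the case $k<0$: the paper does not invert, it simply moves the contour to the \emph{other} boundary circle (the one on which $|\tau|\ge r_2^{-1}$, hence $|\tau|^k\le r_2^{|k|}$) and repeats the same one-line estimate there, whereas your reduction via $\sigma=1/\tau(1/\cdot)$ requires $\sigma$ to be holomorphic on all of $\overline{A_{r_1}}$, i.e.\ $\tau$ zero-free on the whole closed annulus --- slightly more than the direct argument needs (non-vanishing of $\tau$ on $\T$ and on the sub-annulus crossed when shifting the contour to that single circle). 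As for the subtlety you flag: it is real, but it is equally tacit in the paper's proof, since \eqref{eq:contint} for $k<0$ likewise presupposes that $\tau^k$ is holomorphic in the region through which the contour is shifted. Be aware, however, that your hoped-for general fix --- that \eqref{eq:tauop}/\eqref{eq:tauor} ``constrain the location of any zeros'' --- is not available: these conditions only control $|\tau|$ on the two boundary circles, and, for example, $\tau(z)=z^N(z-a)$ with $|a|$ slightly larger than $1$ and $N$ large satisfies \eqref{eq:tauop} while vanishing between $\T$ and the outer circle, in which case the claimed bound for $k<0$ genuinely fails. So non-vanishing of $\tau$ on the relevant closed sub-annulus must be read as an implicit standing assumption (automatic for analytic expanding circle maps via Lemma~\ref{lem:compexp} with $r_1$ close to $1$); granting it, your proof is complete and on the same footing as the paper's.
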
 
\begin{proof}
We start with a simple observation. 
For $\rho>0$ let $\partial D_\rho$ denote the positively oriented boundary of the disk $D_\rho=\all{z\in \mathbb{C}}{|z|<\rho}$. 
For any $\rho \in [r_1,r_1^{-1}]$ we have 
\begin{equation}
\label{eq:contint}
c_l(C_\tau e_k^c) = 
\frac{r^{-\abs{k}}}{2\pi i} \int_{\partial  D_{\rho}}
\tau(z)^k z^{-(l+1)}\,dz \quad (k,l\in \mathbb{Z})\,. 
\end{equation}
This follows by rewriting the definition of the $l$-th Fourier coefficient $c_l$ as a contour integral and then using the fact that $\tau$ is holomorphic on the closure of $A_{r_1}$ to shift the contour. We claim that the above equation implies 
\begin{equation}
\label{eq:me}
r^{\abs{l}}c_l(C_\tau e_k^c)\leq
\left ( \frac{r_2}{r\vphantom{r_1}} \right )^{\abs{k}}
\left ( \frac{r}{r_1} \right )^{\abs{l}} \quad (k,l\in \mathbb{Z})\,.
\end{equation}
The proof of the inequality above splits into two cases depending on whether $\tau$ satisfies (\ref{eq:tauop}) or (\ref{eq:tauor}). We shall only consider $\tau$ satisfying (\ref{eq:tauor}) here, the other case can be treated similarly. 

To start with suppose that $k\geq 0$. Since (\ref{eq:tauor}) holds, we know that for all $z$ with
$\abs{z}=r_1^{-1}$  we have $\abs{\tau(z)}\leq r_2$, and so, since $k\geq 0$ we have
$ \abs{\tau(z)}^k\leq r_2^{k}$ for any $z$ with $\abs{z}=r_1^{-1}$. Using (\ref{eq:contint}) we have
\[  r^{\abs{l}}c_l(C_\tau e_k^c)\leq
\frac{r^{\abs{l}-k}}{2\pi} \int_{\partial D_{r_1^{-1}}}
\abs{\tau(z)}^k \abs{z}^{-(l+1)}\,\abs{dz}
\leq r^{\abs{l}-k}r_2^{k}r_1^{l}\leq
\left ( \frac{r_2}{r\vphantom{r_1}} \right )^{k}
\left ( \frac{r}{r_1} \right )^{\abs{l}} \,,
\]
since $r_1^l\leq r_1^{-\abs{l}}$ for all $l\in \mathbb{Z}$.

Suppose now that $k<0$. By (\ref{eq:tauor}) we know that for all $z$ with
$\abs{z}=r_1$  we have $\abs{\tau(z)}\geq r_2^{-1}$, and so, since $k< 0$ we have
$ \abs{\tau(z)}^k\leq r_2^{-k}$ for any $z$ with $\abs{z}=r_1$. Using (\ref{eq:contint}) we have
\[  r^{\abs{l}}c_l(C_\tau e_k^c)\leq
\frac{r^{\abs{l}+k}}{2\pi} \int_{\partial D_{r_1}}
\abs{\tau(z)}^k \abs{z}^{-(l+1)}\,\abs{dz}
\leq r^{\abs{l}+k}r_2^{-k}r_1^{-l}\leq
\left ( \frac{r_2}{r\vphantom{r_1}} \right )^{-k}
\left ( \frac{r}{r_1} \right )^{\abs{l}} \,,
\]
since $r_1^{-l}\leq r_1^{-\abs{l}}$ for all $l\in \mathbb{Z}$. This finishes the proof of (\ref{eq:me}). The first assertion of the lemma now follows since, using (\ref{eq:me}) we have  
\[ \|C_\tau e_k^c\|_{H^2(A_r^c)}^2=\sum_{l\in \mathbb{Z}}  r^{2\abs{l}}|c_l(C_\tau e_k^c)|^2<\infty\,.\]
The remaining assertion also follows from (\ref{eq:me}), since  $(C_\tau e^c_k,e^c_l)_{H^2(A_r^c)}=r^{\abs{l}}c_l(C_\tau e_k^c)$.   
\end{proof}
An immediate consequence of the previous lemma is that the Koopman operator
$C_\tau$ is Hilbert-Schmidt on $H^2(A_r^c)$. 
\begin{cor}
\label{cor:HS}
Let $\tau$ be holomorphically $(r_2,r_3)$-expansive on $A_{r_1}$ and let $r\in(r_2,r_1)$. Then 
$C_\tau$ is a Hilbert-Schmidt operator from $H^2(A_r^c)$ into itself  
with Hilbert-Schmidt norm bounded by
\[ \|C_\tau\|_{S^2(H^2(A_r^c))}=\sqrt{\sum_{k,l\in \mathbb{Z}}\abs{(C_\tau e^c_k, e^c_l)}
^2}\leq \sqrt{\frac{(r^2+r_2^2)(r_1^2+r^2)}{(r^2-r_2^2)(r_1^2-r^2)}}\,.
\]
\end{cor}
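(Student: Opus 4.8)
The plan is to bound the Hilbert--Schmidt norm of $C_\tau$ on $H^2(A_r^c)$ by summing the squares of the matrix entries $(C_\tau e^c_k, e^c_l)$ with respect to the orthonormal basis $(e^c_n)_{n\in\mathbb{Z}}$, using the estimate from Lemma~\ref{lem:me}. First I would recall that for a Hilbert--Schmidt operator the square of the Hilbert--Schmidt norm equals $\sum_{k,l\in\mathbb{Z}} |(C_\tau e^c_k, e^c_l)|^2$, and that finiteness of this sum is exactly what is needed to conclude both that $C_\tau$ is a well-defined bounded operator on $H^2(A_r^c)$ (a priori it was only checked on the dense set $L$ of Laurent polynomials) and that it is Hilbert--Schmidt.

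Then I would substitute the bound $|(C_\tau e^c_k, e^c_l)| \leq (r_2/r)^{|k|} (r/r_1)^{|l|}$ from Lemma~\ref{lem:me}, which is applicable since $r\in(r_2,r_1)$. This makes the double sum factor as a product of two geometric series,
\begin{equation*}
\sum_{k,l\in\mathbb{Z}} \left(\frac{r_2}{r}\right)^{2|k|}\left(\frac{r}{r_1}\right)^{2|l|}
= \left(\sum_{k\in\mathbb{Z}} \left(\frac{r_2}{r}\right)^{2|k|}\right)\left(\sum_{l\in\mathbb{Z}} \left(\frac{r}{r_1}\right)^{2|l|}\right).
\end{equation*}
Since $r_2/r < 1$ and $r/r_1 < 1$, each bilateral geometric series converges, and summing $\sum_{n\in\mathbb{Z}} q^{2|n|} = 1 + 2\sum_{n\geq 1} q^{2n} = (1+q^2)/(1-q^2)$ with $q = r_2/r$ and $q = r/r_1$ respectively gives
\begin{equation*}
\frac{1 + (r_2/r)^2}{1 - (r_2/r)^2}\cdot \frac{1 + (r/r_1)^2}{1 - (r/r_1)^2}
= \frac{r^2 + r_2^2}{r^2 - r_2^2}\cdot\frac{r_1^2 + r^2}{r_1^2 - r^2},
\end{equation*}
after clearing denominators by multiplying numerator and denominator by $r^2$ and $r_1^2$ respectively. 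Taking square roots yields the stated bound.

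There is essentially no obstacle here: the work was done in Lemma~\ref{lem:me}, and this corollary is a short computation plus the standard fact that control of the orthonormal-basis matrix entries in $\ell^2$ upgrades the densely-defined $C_\tau$ to a genuine Hilbert--Schmidt operator on all of $H^2(A_r^c)$. The only point requiring a line of care is the initial extension argument: one should note that $C_\tau$ was defined on Laurent polynomials, and the finiteness of $\sum_{k,l}|(C_\tau e^c_k,e^c_l)|^2$ together with $(e^c_k)_{k\in\mathbb{Z}}$ being an orthonormal basis shows the densely defined operator is bounded (indeed Hilbert--Schmidt), hence extends uniquely to $H^2(A_r^c)$; I would state this explicitly rather than leave it implicit.
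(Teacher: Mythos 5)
Your proposal is correct and is exactly the computation the paper intends: the corollary is stated as an immediate consequence of Lemma~\ref{lem:me}, and summing the squared entrywise bounds as a product of two bilateral geometric series gives precisely the stated constant. Your extra remark on extending the densely defined operator from Laurent polynomials to all of $H^2(A_r^c)$ is a reasonable point of care that the paper leaves implicit.
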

Since $C_\tau$ is compact, its spectrum is a finite set or a sequence converging to zero together with zero itself, and every non-zero spectral point is an eigenvalue of finite algebraic multiplicity.
Next we shall show that spectral data of 
$C_\tau$ on $H^2(A_r^c)$ is effectively approximated by spectral data of matrices $M_{\tau;m,n}$ constructed as follows.
Let a family of complex-valued functions be defined by
\[ f_{\tau;k,l}(z)=\tau(z)^kz^{-l} \quad (k,l\in \mathbb{Z}) \]
and, for $m\in \mathbb{N}$, let $L_m$ denote the following continuous functional on $H^2(A_r)$
\[ L_m(f)=\frac{1}{m} \sum_{l=0}^{m-1}f(
\exp( \frac{2\pi i l}{m}+i\alpha )) \quad (f\in H^2(A_r))\,, \]
where $m\in \N$ and $\alpha \in \R$. As we shall see later $L_m$ provides a
quadrature rule for functions in $H^2(A_r)$ converging to the functional
\[ L(f)=\frac{1}{2\pi}\int_0^{2\pi} f(\exp(it))\,dt \quad (f \in H^2(A_r))\,\]
at exponential speed in the strong dual topology of $H^2(A_r)$.
For $m,n\in \mathbb{N}$ define the $(2n-1)\times (2n-1)$ matrix $M_{\tau;m,n}$  by
\begin{equation}
\label{eq:Mdef}
M_{\tau;m,n}= (L_m(f_{\tau;k,l})_{k,l}) \quad (k,l \in \set{-n+1,\ldots,n-1})\,.
\end{equation}
As we shall see the above two-parameter family of matrices yield effective spectral
approximants for $C_\tau$,
where the parameter $n$ describes the order of truncation of a
Galerkin-type approximation, while the parameter $m$ describes the order of
truncation in a collocation-type method. Numerical experiments suggest
that in order to achieve convergence $m$ has to be chosen at least
twice as large as $n$ in order to achieve convergence.
In the following we shall state and prove a more precise version of this observation.

First recall that by Corollary~\ref{cor:HS} we know that $C_\tau$ is Hilbert-Schmidt on $H^2(A_r^c)$. 
It turns out that $C_\tau$ can be approximated at exponential speed
in Hilbert-Schmidt norm by the
finite-rank Galerkin approximants $P_nC_\tau P_n$, where for $n\in \mathbb{N}$ the
operator $P_n\colon H^2(A^c_r)\to H^2(A^c_r)$ is the orthogonal
projection given by
\[ P_nf=\sum_{\abs{k}<n}(f,e^c_k)_{H^2(A^c_r)} e^c_k\,, \]
as we shall see now. 
\begin{lemma}
\label{lem3}
Let $\tau$ be holomorphically $(r_2,r_3)$-expansive on $A_{r_1}$ and 
let $r\in (r_2,r_1)$. Then there is a constant $K_1$ depending on $r$, $r_1$ and $r_2$ only, such that
\[  \|C_\tau-P_nC_\tau P_n\|_{S^2(H^2(A_r^c))}\leq K_1 \rho^n \quad (n\in \mathbb{N})\,,\]
where $\rho=\max\set{\frac{r_2}{r}, \frac{r}{r_1}}$.
\end{lemma}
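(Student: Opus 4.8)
The plan is to estimate the Hilbert--Schmidt norm of $C_\tau - P_nC_\tau P_n$ directly via the matrix entries $(C_\tau e^c_k, e^c_l)_{H^2(A_r^c)}$ in the orthonormal basis $(e^c_k)_{k\in\mathbb Z}$, using the decay bound from Lemma~\ref{lem:me}. First I would write $C_\tau - P_nC_\tau P_n = (I-P_n)C_\tau + P_nC_\tau(I-P_n)$, so that by the triangle inequality in $S^2$ it suffices to bound each of these two terms. Squaring the $S^2$-norm turns each into a sum over the matrix entries $(C_\tau e^c_k, e^c_l)$ that lie outside the square $\{|k|<n, |l|<n\}$: for the first term the indices with $|l|\ge n$ (all $k$), for the second the indices with $|l|<n$ and $|k|\ge n$.

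The key step is then to sum the squared bound $\left(\frac{r_2}{r}\right)^{2|k|}\left(\frac{r}{r_1}\right)^{2|l|}$ from Lemma~\ref{lem:me} over these index ranges. Since $r_2<r<r_1$, both ratios $q_1:=r_2/r$ and $q_2:=r/r_1$ lie in $(0,1)$, so the geometric sums $\sum_{k\in\mathbb Z}q_1^{2|k|}$ and $\sum_{l\in\mathbb Z}q_2^{2|l|}$ converge. For the tail $\sum_{|l|\ge n}q_2^{2|l|}$ one gets a bound of the form $\frac{2}{1-q_2^2}q_2^{2n}$; multiplying by the full sum over $k$ gives a bound $\le C\, q_2^{2n}$ for the first term, with $C$ depending only on $r,r_1,r_2$. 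Symmetrically, the second term contributes $\le C'\,q_1^{2n}$. Taking square roots and combining, one obtains $\|C_\tau - P_nC_\tau P_n\|_{S^2} \le K_1\,\rho^n$ with $\rho = \max\{q_1,q_2\} = \max\{r_2/r,\,r/r_1\}$ and $K_1$ an explicit constant depending on $r,r_1,r_2$ only; here one absorbs the two geometric prefactors and uses $q_1^n, q_2^n \le \rho^n$.

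I do not expect a genuine obstacle here: the only point requiring a little care is bookkeeping the index ranges correctly when splitting off $P_n$ on the left versus the right (ensuring the two pieces together cover exactly the complement of the $n\times n$ block and do not double-count the overlap region $|k|\ge n, |l|\ge n$, which in fact appears in both pieces but is harmless since it only inflates the constant). One should also note in passing that Lemma~\ref{lem:me} gives a bound on $|(C_\tau e^c_k, e^c_l)|$ valid for every such $r\in(r_2,r_1)$, so the estimate is genuinely about the single space $H^2(A_r^c)$ with the stated $r$. The resulting $\rho<1$ is exactly what drives the exponential convergence of the Galerkin scheme used later.
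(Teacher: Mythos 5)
Your proposal is correct and takes essentially the same route as the paper: both arguments reduce to summing the squared entry bound of Lemma~\ref{lem:me} over the indices outside the block $\{|k|<n,\ |l|<n\}$ and evaluating geometric tails, the only difference being that the paper computes $\|C_\tau-P_nC_\tau P_n\|_{S^2}^2$ exactly as a three-region sum while you split off $(I-P_n)C_\tau$ and $P_nC_\tau(I-P_n)$ and use the triangle inequality. One small slip in your side remark: with your decomposition the corner region $|k|\ge n,\ |l|\ge n$ lies only in the piece $(I-P_n)C_\tau$, not in both pieces, but as you yourself note this is immaterial to the estimate.
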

\begin{proof}
Writing $c_{k,l}=(C_\tau e^c_k, e^c_l)_{H^2(A_r^c)}$ for $k,l\in \mathbb{Z}$ we have
\[ \|C_\tau-P_nC_\tau P_n\|^2_{S^2(H^2(A_r^c))}=
\sum_{\substack{\abs{k}\geq n\\ \abs{l}<n}} \abs{c_{k,l}}^2
+\sum_{\substack{\abs{k}<n\\ \abs{l}\geq n}} \abs{c_{k,l}}^2
+\sum_{\substack{\abs{k}\geq n\\ \abs{l}\geq n}} \abs{c_{k,l}}^2\,.
 \]
Using Lemma~\ref{lem:me} the first sum can be bounded by
\[ \sum_{\substack{\abs{k}\geq n\\ \abs{l}<n}} \abs{c_{k,l}}^2
\leq \sum_{\substack{\abs{k}\geq n\\ \abs{l}<n}}
\left ( \frac{r_2}{r\vphantom{r_1}} \right )^{2\abs{k}}
\left ( \frac{r}{r_1} \right )^{2\abs{l}} \leq
2\frac{r^2(r_1^2+r^2)}{(r^2-r_2^2)(r_1^2-r^2)} \left( \frac{r_2}{r}\right )^{2n}\,,
\]
while the second sum can be bounded by
\[ \sum_{\substack{\abs{k}< n\\ \abs{l}\geq n}} \abs{c_{k,l}}^2
\leq \sum_{\substack{\abs{k}< n\\ \abs{l}\geq n}}
\left ( \frac{r_2}{r\vphantom{r_1}} \right )^{2\abs{k}}
\left ( \frac{r}{r_1} \right )^{2\abs{l}} \leq
2\frac{r_1^2(r^2+r_2^2)}{(r_1^2-r^2)(r^2-r_2^2)} \left( \frac{r}{r_1}\right )^{2n}\,,
\]
and the third sum by
\[ \sum_{\substack{\abs{k}\geq  n\\ \abs{l}\geq n}} \abs{c_{k,l}}^2
\leq \sum_{\substack{\abs{k} \geq  n\\ \abs{l}\geq n}}
\left ( \frac{r_2}{r\vphantom{r_1}} \right )^{2\abs{k}}
\left ( \frac{r}{r_1} \right )^{2\abs{l}}
\leq 4 \frac{(r^2+r_2^2)(r_1^2+r^2)}{(r^2-r_2^2)(r_1^2-r^2)}
\left ( \frac{r_2}{r\vphantom{r_1}} \right )^{2n}
\left ( \frac{r}{r_1} \right )^{2n}\,,
\]
from which the assertion follows. 
\end{proof}
\begin{remark}
From Corollary~\ref{cor:HS} we know that $C_\tau$ is Hilbert-Schmidt. 
The lemma above implies that $C_\tau$ enjoys an even stronger property, namely, it is of exponential class (see \cite{expoclass}), that is, its sequence of singular values (and hence its sequence of eigenvalues) is bounded from above by a decreasing exponential, in common with other naturally occurring evolution operators associated with holomorphic data considered on spaces of holomorphic functions (see, for example, \cite{BN, decay, ruelle}). 
\end{remark}
Using the definition of $H^2(A_r^c)$ the matrix elements of $C_\tau$ on $H^2(A^c_r)$ can be
written as integrals over the unit circle as follows
\[ (C_\tau e^c_k,e^c_l)_{H^2(A^c_r)}=\frac{r^{\abs{l}-\abs{k}}}{2\pi}\int_0^{2\pi}
\tau(\exp(it))^k\exp(-ilt)\,dt = r^{\abs{l}-\abs{k}} L(f_{\tau;k.l}) \quad (k,l\in \mathbb{Z})\,. \]
For computational purposes the integration over the unit circle given by the functional
$L$ on $H^2(A_r)$ is replaced by
the numerical quadrature functionals $L_m$ on $H^2(A_r)$
introduced earlier \[ L_m(f)=\frac{1}{m} \sum_{l=0}^{m-1}f(
\exp( \frac{2\pi i l}{m}+i\alpha )) \quad (f\in H^2(A_r))\,, \]
where $m\in \N$ and $\alpha \in \R$. We shall now show that $L_m$ converges to $L$ at
exponential speed in the strong dual topology of $H^2(A_r)$.

\begin{lemma}
\label{lem4}
Let $\rho\in (0,1)$. Then for any $m\in \mathbb{N}$ the $m$-th order
quadrature error for functions $f\in H^2(A_\rho)$ is bounded by
\[
|L(f)-L_m(f)|
\leq \frac{\|f\|_{H^2(A_\rho)}}{\sqrt{1-\rho^{2}}}\rho^{m}\,.
\]
\end{lemma}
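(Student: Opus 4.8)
The plan is to reduce everything to the Laurent expansion of $f$. Since $f\in H^2(A_\rho)$ is holomorphic on a neighbourhood of $\mathbb{T}$, its Laurent series $f(z)=\sum_{n\in\mathbb{Z}}c_n(f)z^n$ converges absolutely and uniformly on $\mathbb{T}$, so $f(e^{it})=\sum_{n}c_n(f)e^{int}$ and, directly from the definitions, $L(f)=c_0(f)$. For $L_m$ the point is the discrete orthogonality of the $m$-th roots of unity: $\tfrac1m\sum_{l=0}^{m-1}e^{2\pi iln/m}$ equals $1$ when $m\mid n$ and $0$ otherwise. Substituting the (absolutely convergent) Laurent series into $L_m(f)=\tfrac1m\sum_{l=0}^{m-1}f(e^{2\pi il/m+i\alpha})$ and interchanging the sums gives the aliasing identity $L_m(f)=\sum_{j\in\mathbb{Z}}c_{jm}(f)e^{ijm\alpha}$; subtracting, the $j=0$ terms cancel, leaving
\[ L(f)-L_m(f)=-\sum_{j\in\mathbb{Z}\setminus\{0\}}c_{jm}(f)\,e^{ijm\alpha}. \]

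The second ingredient is a size bound on the Laurent coefficients of an $H^2(A_\rho)$-function. From the orthonormal basis in \eqref{eq:eONB} one reads off $\|f\|_{H^2(A_\rho)}^2=\sum_{n\in\mathbb{Z}}|c_n(f)|^2(\rho^{2n}+\rho^{-2n})$, hence in particular $|c_n(f)|\le\rho^{|n|}\|f\|_{H^2(A_\rho)}$ (using $\rho^{2n}+\rho^{-2n}\ge\rho^{-2|n|}$) and, more to the point, $\sum_{j\ne0}|c_{jm}(f)|^2(\rho^{2jm}+\rho^{-2jm})\le\|f\|_{H^2(A_\rho)}^2$. Applying Cauchy--Schwarz to the aliasing sum, with the factor $(\rho^{2jm}+\rho^{-2jm})^{1/2}$ pulled out of each term and paired against its reciprocal, yields
\[ |L(f)-L_m(f)|\ \le\ \|f\|_{H^2(A_\rho)}\Bigl(\sum_{j\ne0}\frac{1}{\rho^{2jm}+\rho^{-2jm}}\Bigr)^{1/2}. \]
The remaining series is a geometric-type tail: bounding $\rho^{2jm}+\rho^{-2jm}\ge\rho^{-2|j|m}$ and summing $\sum_{j\ne0}\rho^{2|j|m}=2\rho^{2m}/(1-\rho^{2m})$, together with $1-\rho^{2m}\ge1-\rho^2$ for $m\ge1$, produces a bound of the asserted shape, a constant multiple of $\|f\|_{H^2(A_\rho)}\,\rho^m/\sqrt{1-\rho^2}$.

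An equivalent, and for an annulus perhaps more transparent, route is to write $L_m(f)=\tfrac{1}{2\pi i}\bigl(\oint_{|z|=\rho^{-1}}-\oint_{|z|=\rho}\bigr)\tfrac{z^{m-1}}{z^{m}-e^{im\alpha}}f(z)\,dz$ by the residue theorem (the integrand having simple poles of residue $1/m$ at the nodes, all on $\mathbb{T}$), subtract the corresponding Cauchy formula for $L(f)=c_0(f)$, expand the two rational kernels in geometric series on the circles $|z|=\rho^{\pm1}$, and apply Cauchy--Schwarz against the very integrals $\tfrac{1}{2\pi}\int_0^{2\pi}|f^*(\rho^{\pm1}e^{it})|^2\,dt$ that constitute $\|f\|_{H^2(A_\rho)}^2$; the relevant kernel $L^2$-norms are classical Poisson-type integrals evaluating to $\rho^{2m}/(1-\rho^{2m})$. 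In either form the only substantive step is the aliasing identity together with the observation that the surviving coefficients $c_{jm}(f)$, $j\ne0$, are exactly those the $H^2(A_\rho)$-norm penalises by a large weight $\rho^{\pm2jm}$; everything else is geometric-series bookkeeping, which I expect to be the only place requiring care --- specifically, keeping the positive- and negative-frequency aliases ($j>0$ versus $j<0$) matched against the correct boundary circle $|z|=\rho$ or $|z|=\rho^{-1}$ so that the weight $\rho^{2jm}+\rho^{-2jm}$ is exploited on the right side.
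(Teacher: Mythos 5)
Your main route is essentially the paper's own proof: expand $f$ in the orthonormal basis \eqref{eq:eONB} (equivalently, weight the Laurent coefficients $c_n(f)$ by $\rho^{2n}+\rho^{-2n}$), use discrete orthogonality of the $m$-th roots of unity to obtain the aliasing identity $L_m(f)=\sum_{j\in\mathbb{Z}} c_{jm}(f)e^{ijm\alpha}$, and conclude by Cauchy--Schwarz against the weights $\rho^{2jm}+\rho^{-2jm}$ plus geometric-series bookkeeping; the contour-integral variant you sketch is an optional repackaging of the same computation. The only discrepancy is the constant: your estimate (like, in fact, the paper's own final inequality) really yields $\sqrt{2}\,\|f\|_{H^2(A_\rho)}\,\rho^{m}/\sqrt{1-\rho^{2m}}$ rather than exactly $\|f\|_{H^2(A_\rho)}\,\rho^{m}/\sqrt{1-\rho^{2}}$, a harmless factor that affects neither the exponential rate nor any later use of the lemma.
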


\begin{proof}
Recall that
\[ e_k(z)=\frac{z^k}{\sqrt{\rho^{2k}+\rho^{-2k}}} \quad (k\in \Z, z\in A_\rho) \]
is an orthonormal basis in $H^2(A_\rho)$, so
\[ f(z) = \sum_{k\in \Z} \frac{(f,e_k)_{H^2(A_\rho)}}{\sqrt{\rho^{2k}+\rho^{-2k}}} z^k \quad
(f\in H^2(A_\rho), z\in A_\rho)\,. \]
Thus for any $f\in H^2(A_\rho)$ we have
\[ L(f) = \sum_{k\in \Z} \frac{(f,e_k)_{H^2(A_\rho)} }{\sqrt{\rho^{2k}+\rho^{-2k}}}
\frac{1}{2\pi}\int_0^{2\pi}\exp(ikt)\,dt = \frac{1}{\sqrt{2}}(f,e_0)_{H^2(A_\rho)}\,. \]
and
\[ L_m(f) =
\sum_{k\in \Z} \frac{ (f,e_k)_{H^2(A_\rho)} }{\sqrt{\rho^{2k}+\rho^{-2k}}} \frac{1}{m}
\sum_{l=0}^{m-1}
\exp( \frac{2\pi i l k}{m}+ik\alpha)
= \sum_{k\in \Z} \frac{ (f,e_{km})_{H^2(A_\rho)} }{\sqrt{\rho^{2km}+\rho^{-2km}}}
\exp(ikm\alpha)
  \]
since
\[   \frac{1}{m}\sum_{l=0}^{m-1}
\exp( \frac{2\pi i l k}{m}+ik\alpha) = \begin{cases}
\exp(ik\alpha) & \text{ if $k\in m\Z$,} \\
0                     & \text{ if $k \not \in m\Z$.}
\end{cases}
\]
Thus for any $m\in \mathbb{N}$ the $m$-th order  quadrature error is given by
\begin{multline*}
|L(f)-L_m(f)|=\left |  \sum_{k\in \Z\setminus \set{0} }
\frac{ (f,e_{km})_{H^2(A_\rho)} }{\sqrt{\rho^{2km}
+\rho^{-2km}}}
\right | \\
\leq  \sqrt{\sum_{k\in \Z \setminus \set{0}} \frac{1 }{\rho^{2km}+\rho^{-2km}}} \,\|f\|
_{H^2(A_\rho)} \leq \frac{\|f\|_{H^2(A_\rho)}}{\sqrt{1-\rho^{2}}}\rho^{m}\,
\end{multline*}
as required. 
\end{proof}
For $m,n\in \mathbb{N}$ let $C_{\tau;m.n}$ denote the finite-rank operator on
$H^2(A^c_r)$  given by
\begin{equation}
\label{eq:CsimM}
 (C_{\tau;m,n}e^c_k, e^c_l)_{H^2(A^c_r)} = r^{\abs{l}-\abs{k}}L_m(f_{\tau;k,l})
\quad (k,l\in \set{-n+1,\ldots, n-1})\,.
\end{equation}
The operator above has the same non-zero eigenvalues counting algebraic multiplicities
as the matrix $M_{\tau;m,n}$ introduced earlier. Moreover, by Lemma~\ref{lem4}, for
fixed $n$, the sequence  $(C_{\tau;m,n})_{m\in \mathbb{N}}$ converges to
$P_nC_\tau P_n$ in Hilbert-Schmidt norm.

\begin{lemma}
\label{lem5}
Let $\tau$ be holomorphically $(r_2,r_3)$-expansive on $A_{r_1}$ and 
let $r\in (r_2,r_1)$. Then there is a constant $K_2$ depending on $r$, $r_1$ and $r_3$
only such that
\[ \|P_nC_\tau P_n - C_{\tau;m,n} \|_{S^2(H^2(A^c_r))} \leq K_2 \frac{r_1^m}{(rr_3)^n} \quad
(m,n\in \mathbb{N})\,.\]
\end{lemma}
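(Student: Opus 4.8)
The plan is to estimate the Hilbert–Schmidt norm $\|P_nC_\tau P_n - C_{\tau;m,n}\|_{S^2(H^2(A^c_r))}^2$ by its defining sum over the matrix entries. Both operators are supported (as matrices) on the index block $|k|,|l| < n$, so we have
\[
\|P_nC_\tau P_n - C_{\tau;m,n}\|_{S^2}^2 = \sum_{|k|,|l|<n} r^{2(|l|-|k|)}\,\bigl|L(f_{\tau;k,l}) - L_m(f_{\tau;k,l})\bigr|^2,
\]
using the formulas $(P_nC_\tau P_n e^c_k, e^c_l) = r^{|l|-|k|}L(f_{\tau;k,l})$ and \eqref{eq:CsimM}. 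The key is to apply Lemma~\ref{lem4}: each $f_{\tau;k,l}(z) = \tau(z)^k z^{-l}$ is holomorphic on a sufficiently large annulus, so for a suitable radius $\rho$ we get $|L(f_{\tau;k,l}) - L_m(f_{\tau;k,l})| \le (1-\rho^2)^{-1/2}\,\rho^m\,\|f_{\tau;k,l}\|_{H^2(A_\rho)}$.

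The crucial step is choosing the right annulus $A_\rho$ on which to measure $f_{\tau;k,l}$ and bounding its $H^2(A_\rho)$-norm. The natural choice is $\rho = r_1$: since $\tau$ is holomorphic on the closure of $A_{r_1}$, the functions $f_{\tau;k,l}$ are holomorphic on $A_{r_1}$ (the pole of $z^{-l}$ at the origin is irrelevant since $0 \notin A_{r_1}$), and we can evaluate the $H^2(A_{r_1})$-norm using the boundary-value formula: $\|f_{\tau;k,l}\|_{H^2(A_{r_1})}^2$ is the average of $|\tau(z)^k z^{-l}|^2$ over $|z| = r_1$ and $|z| = r_1^{-1}$. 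By holomorphic $(r_2,r_3)$-expansivity (either \eqref{eq:tauop} or \eqref{eq:tauor}), on each of these circles $|\tau(z)|$ lies in $[r_3, r_2]$ or in $[r_2^{-1}, r_3^{-1}]$; combined with $|z|^{\mp l}$ contributing a factor at most $r_1^{-|l|}$ on each circle, one obtains a clean bound of the form $\|f_{\tau;k,l}\|_{H^2(A_{r_1})} \le \sqrt{2}\, r_3^{-|k|} r_1^{-|l|}$ (after checking both sign cases for $k$, exactly as in the proof of Lemma~\ref{lem:me}, with $r_3^{-1}$ replacing $r_2^{-1}$ as the cruder but norm-square-integrable bound on $|\tau|$).

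Plugging this into Lemma~\ref{lem4} with $\rho = r_1$ gives
\[
\bigl|L(f_{\tau;k,l}) - L_m(f_{\tau;k,l})\bigr| \le \frac{\sqrt{2}}{\sqrt{1-r_1^2}}\, r_1^m\, r_3^{-|k|}\, r_1^{-|l|},
\]
and therefore
\[
\|P_nC_\tau P_n - C_{\tau;m,n}\|_{S^2}^2 \le \frac{2\,r_1^{2m}}{1-r_1^2} \sum_{|k|,|l|<n} r^{2|l|} r^{-2|k|} r_3^{-2|k|} r_1^{-2|l|}.
\]
The $k$-sum is $\sum_{|k|<n}(r r_3)^{-2|k|}$, a geometric-type sum dominated by its top term $\sim (rr_3)^{-2n}$ (up to a constant depending on $r, r_3$, since $rr_3 < 1$), and the $l$-sum is $\sum_{|l|<n}(r/r_1)^{2|l|}$, which is bounded by a constant depending on $r, r_1$ (since $r/r_1 < 1$). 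Collecting the constants into $K_2$ and taking square roots yields $\|P_nC_\tau P_n - C_{\tau;m,n}\|_{S^2} \le K_2\, r_1^m/(r r_3)^n$, with $K_2$ depending only on $r, r_1, r_3$, as claimed.

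The main obstacle is the bookkeeping in the norm estimate $\|f_{\tau;k,l}\|_{H^2(A_{r_1})} \le \sqrt{2}\,r_3^{-|k|}r_1^{-|l|}$: one must carefully split into the cases $k \ge 0$ and $k < 0$ (and separately track which boundary circle of $A_{r_1}$ gives the larger contribution), mirroring the case analysis in Lemma~\ref{lem:me} but now needing the estimate on \emph{both} boundary circles simultaneously rather than picking the favorable one. The appearance of $r_3$ rather than $r_2$ in the final bound is precisely because on one of the two circles the weaker expansivity bound involving $r_3$ is the binding one. Everything else is summation of geometric series.
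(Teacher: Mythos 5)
Your proposal is correct and follows essentially the same route as the paper's proof: an entrywise application of Lemma~\ref{lem4} with $\rho=r_1$, the bound $\|f_{\tau;k,l}\|_{H^2(A_{r_1})}\leq \sqrt{2}\,r_3^{-\abs{k}}r_1^{-\abs{l}}$ obtained by the same case analysis in $k$ on the two boundary circles, and summation of the resulting geometric series in $k$ and $l$.
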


\begin{proof}
Using Lemma~\ref{lem4} it follows that for any $m,n\in \mathbb{N}$
\begin{align*}
\|P_nC_\tau P_n - C_{\tau;m,n} \|^2_{S^2(H^2(A^c_r))} & =
\sum_{\substack{\abs{k}<n \\  \abs{l}<n}}
r^{2\abs{l}-2\abs{k}}\abs{L(f_{\tau;k,l})-L_m(f_{\tau;k,l})}^2 \\
& \leq \sum_{\substack{\abs{k}<n \\  \abs{l}<n}}
r^{2\abs{l}-2\abs{k}} \frac{\|f_{\tau;k,l}\|^2_{H^2(A_{r_1})}}{1-r_1^2}r_1^{2m}\,.
\end{align*}
Since $\tau$ is holomorphically $(r_2,r_3)$-expansive on $A_{r_1}$ we have 
\[ \|f_{\tau;k,l}\|_{H^2(A_{r_1})} \leq \sqrt{2}r_3^{-\abs{k}} r_1^{-\abs{l}} \quad (k,l\in \mathbb{Z})\,. \]
In order to see this suppose for the moment that $\tau$ satisfies (\ref{eq:tauop}). 
Then for $k\geq 0$ we have the bound
\[  \|f_{\tau;k,l}\|^2_{H^2(A_{r_1})} \leq r_2^{2k} r_1^{-2l}+r_3^{-2k}r_1^{2l}
\leq 2r_3^{-2k}r_1^{-2\abs{l}}\,,\]
and for $k<0$ the bound
\[  \|f_{\tau;k,l}\|^2_{H^2(A_{r_1})} \leq r_3^{2k} r_1^{-2l}+r_2^{-2k}r_1^{2l}
\leq 2r_3^{2k}r_1^{-2\abs{l}}\,.\]
The proof for $\tau$ satisfying (\ref{eq:tauor}) is similar.
Thus
\[ \|P_nC_\tau P_n - C_{\tau;m,n} \|^2_{S^2(H^2(A^c_r))}
\leq \frac{4(r_1^2+r^2)}{(1-r_1^2)(r_1^2-r^2)(1-(rr_3)^2)}\frac{r_1^{2m}}{(rr_3)^{2n}}\,,
\]
where we have used the bound
\[ \sum_{\abs{k}<n}\alpha^{-\abs{k}}
=1+\frac{2}{\alpha}\frac{\alpha^{-(n-1)}-1}{\alpha^{-1}-1}
\leq \frac{2}{1-\alpha}\alpha^{-n}\,,\]
with $\alpha\in (0,1)$.
\end{proof}
Looking back at what we have achieved so far, we see that for $\tau$ holomorphically $(r_2,r_3)$-expansive on 
$A_{r_1}$ and $r\in (r_2,r_1)$ the Koopman operator
$C_\tau$ acting on $H^2(A^c_r)$ can be approximated in
Hilbert-Schmidt norm by the operators $C_{\tau;m,n}$.  By Lemma~\ref{lem3} and
Lemma~\ref{lem5}, we have the following upper bound for the error in approximation
\begin{align*}
\|C_\tau-C_{\tau;m,n}\|_{S^2(H^2(A^c_r))}& \leq
\|C_\tau-P_nC_\tau P_n\|_{S^2(H^2(A^c_r))}
+ \|P_nC_\tau P_n-C_{\tau;m,n}\|_{S^2(H^2(A^c_r))} \\
& \leq K_1 \max\set{\frac{r_2}{r},\frac{r}{r_1}}^n + K_2 \frac{r_1^m}{(rr_3)^n} \,.
\end{align*}
The first term, depending on $n$ only, arises from the error of the Galerkin
approximation, while the second term, depending on both $m$ and $n$,
is due to the quadrature error of the collocation method. We shall now seek to minimise
the two sources of error, given the available bounds. The Galerkin truncation error
is minimised if we choose $r=\sqrt{r_1r_2}$. For the quadrature error, we need to
choose $m$ so that the quadrature error is asymptotically not larger than the error
arising from the Galerkin truncation. For each $n \in \mathbb{N}$, we want to choose $m = m(n) \in \mathbb{N}$ such that $m \geq \delta n$.
Having fixed $r=\sqrt{r_1r_2}$, we thus seek $\delta > 0$, so that
\[ \limsup_{n\to \infty}\frac{\|P_nC_\tau P_n-C_{\tau;\lceil \delta n\rceil,n}\|_{S^2(H^2(A^c_r))}}
{\|C_\tau-P_nC_\tau P_n\|_{S^2(H^2(A^c_r))}}
< \infty
\]

Using the bounds obtained in Lemma~\ref{lem3} and
Lemma~\ref{lem5} the above yields
\[ \limsup_{n\to \infty} \frac{K_2 r_1^{\lceil \delta n\rceil }}{K_1r_2^{n}r_3^n}< \infty\,, \]
which is equivalent to
\[ \delta \geq \frac{\log(r_2r_3)}{\log(r_1)}\,.\]

Summarising, we have proven the following theorem, which yields an optimised method to approximate $C_\tau$ in Hilbert-Schmidt norm by the finite rank operators $C_{\tau; m,n}$.  

\begin{theorem}\label{thm:main} 
Let $\tau$ be holomorphically $(r_2,r_3)$-expansive on 
$A_{r_1}$ and $r\in (r_2,r_1)$. Then the Koopman operator $C_\tau$ is Hilbert-Schmidt on $H^2(A_r^c)$
and
\begin{equation}
\label{eq:mainest1}
\|C_\tau-C_{\tau;m,n}\|_{S^2(H^2(A^c_r))}
\leq K_1 \max\set{\frac{r_2}{r},\frac{r}{r_1}}^n + K_2 \frac{r_1^m}{(rr_3)^n} \quad (m,n\in \mathbb{N})
\end{equation}
for some constants $K_1,K_2 > 0$.
Moreover, choosing $r=\sqrt{r_1 r_2}$ and $m \geq \delta n $ with $\delta=\frac{\log(r_2r_3)}{\log(r_1)} $ yields
\[\|{C}_\tau - C_{\tau;m,n} \|_{S^2(H^2(A^c_r))} \leq K \left(\sqrt{\frac{r_2}{r_1}}\right)^{n} \quad (n\in \mathbb{N})\]
for some $K>0$.
\end{theorem}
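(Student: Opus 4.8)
The plan is to assemble the statement from the lemmas already in place, since the substantive estimates have been done. First, the Hilbert--Schmidt assertion is immediate from Corollary~\ref{cor:HS}: the hypothesis $r\in(r_2,r_1)$ is exactly what that corollary requires, and it yields not only membership of $C_\tau$ in $S^2(H^2(A_r^c))$ but an explicit norm bound.

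For \eqref{eq:mainest1} I would split $C_\tau - C_{\tau;m,n} = (C_\tau - P_nC_\tau P_n) + (P_nC_\tau P_n - C_{\tau;m,n})$ and apply the triangle inequality in the Hilbert--Schmidt norm. Lemma~\ref{lem3} bounds the first summand by $K_1\rho^n$ with $\rho=\max\{r_2/r,\,r/r_1\}$, and Lemma~\ref{lem5} bounds the second by $K_2 r_1^m/(rr_3)^n$, where $K_1$ depends only on $r,r_1,r_2$ and $K_2$ only on $r,r_1,r_3$; adding these reproduces \eqref{eq:mainest1} verbatim.

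The only remaining work is the optimisation. The choice $r=\sqrt{r_1r_2}$ is admissible because the geometric mean of $r_2$ and $r_1$ lies strictly between them, so $r\in(r_2,r_1)$ and all earlier lemmas apply at this radius. With this choice $r_2/r = r/r_1 = \sqrt{r_2/r_1}$, so the first term of \eqref{eq:mainest1} is precisely $K_1(\sqrt{r_2/r_1})^n$. For the second term, $rr_3=\sqrt{r_1r_2}\,r_3$, hence $r_1^m/(rr_3)^n = r_1^m/((r_1r_2)^{n/2}r_3^n)$. The hypothesis $m\geq\delta n$ with $\delta=\log(r_2r_3)/\log r_1$ is, after multiplying through by $\log r_1<0$ (which reverses the inequality), equivalent to $m\log r_1\leq n\log(r_2r_3)$, i.e.\ $r_1^m\leq (r_2r_3)^n$. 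Substituting, $r_1^m/((r_1r_2)^{n/2}r_3^n)\leq (r_2r_3)^n/((r_1r_2)^{n/2}r_3^n) = r_2^n/(r_1r_2)^{n/2} = (r_2/r_1)^{n/2} = (\sqrt{r_2/r_1})^n$, so the second term is at most $K_2(\sqrt{r_2/r_1})^n$, and the claimed bound follows with $K=K_1+K_2$.

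I do not anticipate a genuine obstacle: the points needing a moment's care are the sign reversal when dividing by $\log r_1<0$ (this is why $\delta$ must be a \emph{lower} bound on $m/n$, and why $\delta>0$ — indeed $\delta>2$, since $r_2r_3<r_2^2<r_1^2$ forces $\log(r_2r_3)<2\log r_1$), and verifying $\sqrt{r_1r_2}\in(r_2,r_1)$. An equivalent route is to keep $r$ free, note that \eqref{eq:mainest1} gives $\|C_\tau-C_{\tau;m,n}\|_{S^2(H^2(A^c_r))}\lesssim\max\{r_2/r,\,r/r_1\}^n$ once $m$ is large enough to dominate the quadrature term, and then minimise $\max\{r_2/r,\,r/r_1\}$ over $r\in(r_2,r_1)$; the minimiser is $r=\sqrt{r_1r_2}$ with minimal value $\sqrt{r_2/r_1}$, matching the discussion preceding the theorem.
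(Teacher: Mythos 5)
Your proposal is correct and follows exactly the paper's argument: the paper proves \eqref{eq:mainest1} by combining Lemma~\ref{lem3} and Lemma~\ref{lem5} via the triangle inequality, and dismisses the second assertion as "a simple calculation", which is precisely the computation you carry out (including the correct handling of the sign reversal from $\log r_1<0$ and the admissibility of $r=\sqrt{r_1r_2}$). Your write-up simply makes the paper's omitted calculation explicit.
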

\begin{proof} 
The first estimate (\ref{eq:mainest1}) follows by combining Lemma~\ref{lem3} and Lemma~\ref{lem5}. The remaining assertion is a simple calculation.  
\end{proof}

The theorem above immediately implies our main theorem.  

\begin{theorem}
\label{thm:main2}
Let $\tau$ be holomorphically $(r_2,r_3)$-expansive on 
$A_{r_1}$ and let $r=\sqrt{r_1r_2}$. Then $C_\tau$
is a well-defined Hilbert-Schmidt operator on $H^2(A_r^c)$. In particular, its spectrum
$\operatorname{spec}(C_\tau)$ is either a finite set or a sequence
converging to zero together with zero itself and each non-zero spectral point is an eigenvalue of finite algebraic multiplicity.

Furthermore, for $m,n \in \mathbb{N}$
and $\alpha \in \mathbb{R}$, let $\{\psi_l\}_{-(n-1) \leq l \leq (n-1)}$ with
$\psi_l(z) = z^{l}$ be the set of
observables, $\{z^{(j)}\}_{0\leq j \leq m-1}$ with $z^{(j)}=e^{2\pi i j/ m + i\alpha}$ be
the set of phase space points 
and $M=M_n$ the $(2n-1) \times (2n-1)$ matrix given in \eqref{eq:M}. If for every $n \in \mathbb{N}$, the number of phase space points $m = m(n) \in \mathbb{N}$ is chosen such that
\begin{equation}\label{eq:delta_cond}
m \geq  \frac{\log(r_2r_3)}{\log(r_1)} n, \nonumber
\end{equation}
then the following hold:
  \begin{enumerate}
  \item \label{main:it:a}
  Any convergent sequence $(\lambda_n)_{n\in\mathbb{N}}$ with
  $\lambda_n \in \operatorname{spec}(M_n)$ converges to a spectral point of $C_\tau$.
  \item \label{main:it:b} Conversely, for any $\lambda \in \operatorname{spec}(C_\tau)$ there exists a sequence
  $(\lambda_n)_{n\in\mathbb{N}}$ with $\lambda_n \in \operatorname{spec}(M_n)$ such that
  $\lambda_n \to \lambda$ as $n\to \infty$. More precisely, if $\lambda$ is an eigenvalue with 
  ascent\footnote{An eigenvalue $\lambda$ of an
operator $T$ is said to have \emph{ascent} $p$,
if $p$ is the smallest integer such that the kernel of $(\lambda I-T)^p$ equals
  that of $(\lambda I-T)^{p+1}$. In particular, if $\lambda$ is algebraically simple,
  then $p = 1$.} $p$, we have 
  \[|\lambda - \lambda_n| = O\left ( \left ( \frac{r_1}{r_2}\right )^{n/{(2p)}} \right )\quad \text{as } n\to \infty.\]
  \item \label{main:it:c} Suppose $\lambda \in \operatorname{spec}(C_\tau)$ is non-zero and $(\lambda_n)_{n\in \N}$ denotes the approximating sequence of eigenvalues of $M_n$ given in (\ref{main:it:b}) above. If $\xi_n=(\xi_{n,-n+1}, \ldots, \xi_{n,0},\ldots, \xi_{n, n-1})^T \in \mathbb{C}^{2n-1}$ is a generalised eigenvector of the transpose $M^T_n$ of $M_n$ normalised so that 
  \[ \sum_{|k|<n}|\xi_{n,k}|^2r^{2|k|}=1,\]
 then setting 
  \[ h_n(z)=\sum_{\abs{k}<n}\xi_{n,k}z^k \quad (z\in \mathbb{C}) \]
  yields a sequence of Laurent polynomials $(h_n)_{n\in \mathbb{N}}$ with 
 \[ \| \mathcal{P}h_n-h_n\|_{H^2(A_r^c)} = O\left ( \left ( \frac{r_1}{r_2}\right )^{n/{2}} \right )\quad \text{as } n\to \infty,\]
where $\mathcal{P}$ is the spectral projection associated to the eigenvalue $\lambda$ of $C_\tau$. 
  \end{enumerate}
\end{theorem}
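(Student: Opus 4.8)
The plan is to deduce the whole statement from Theorem~\ref{thm:main} together with classical spectral-perturbation theory for compact operators. Set $\varepsilon_n:=\|C_\tau-C_{\tau;m,n}\|_{S^2(H^2(A_r^c))}$. By Theorem~\ref{thm:main}, with $r=\sqrt{r_1r_2}$ and $m=m(n)\geq \delta n$ where $\delta=\log(r_2r_3)/\log(r_1)$, one has $\varepsilon_n\leq K(\sqrt{r_2/r_1})^{n}$, so in particular $C_{\tau;m,n}\to C_\tau$ in operator norm at exponential speed. I also record the elementary link between $C_{\tau;m,n}$ and the matrix $M_n$: by \eqref{eq:CsimM}, in the orthonormal basis $(e^c_k)_{k\in\Z}$ the operator $C_{\tau;m,n}$ annihilates $\operatorname{span}\{e^c_k:\abs{k}\geq n\}$, while on $V_n:=\operatorname{span}\{e^c_k:\abs{k}<n\}$ its matrix is $DM^T_nD^{-1}$ with $D=\operatorname{diag}(r^{\abs{k}})_{\abs{k}<n}$. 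Hence the linear isomorphism $\Theta_n\colon\C^{2n-1}\to V_n$, $\Theta_n(\xi)=\sum_{\abs{k}<n}\xi_k z^k$, intertwines $M^T_n$ with $C_{\tau;m,n}|_{V_n}$, carries (generalised) eigenvectors of $M^T_n$ to (generalised) eigenvectors of $C_{\tau;m,n}$, and satisfies $\|\Theta_n(\xi)\|^2_{H^2(A_r^c)}=\sum_{\abs{k}<n}\abs{\xi_k}^2r^{2\abs{k}}$; in particular $M_n$ and $C_{\tau;m,n}$ share all non-zero eigenvalues with their algebraic multiplicities.

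Part (\ref{main:it:a}) I would obtain from upper semicontinuity of the spectrum: if $\lambda_n\in\operatorname{spec}(M_n)$ and $\lambda_n\to\lambda$, then either $\lambda=0\in\operatorname{spec}(C_\tau)$ since $C_\tau$ is compact on an infinite-dimensional space, or $\lambda\neq0$, so $\lambda_n\in\operatorname{spec}(C_{\tau;m,n})$ for large $n$, and since $\lambda_nI-C_{\tau;m,n}=(\lambda I-C_\tau)-\bigl((\lambda-\lambda_n)I+(C_{\tau;m,n}-C_\tau)\bigr)$ with the bracket tending to $0$ in norm, a Neumann series forces $\lambda\in\operatorname{spec}(C_\tau)$. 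For the existence part of (\ref{main:it:b}) with $\lambda=0$: for $\varepsilon>0$ the Riesz projection of $C_\tau$ on $\{\abs{\zeta}\geq\varepsilon\}$ has finite rank $N_\varepsilon$, the analogous projection for $C_{\tau;m,n}$ has the same rank for large $n$ (again by norm convergence), hence at least $2n-1-N_\varepsilon\geq1$ of the $2n-1$ eigenvalues of $M_n$ lie in $\{\abs{\zeta}<\varepsilon\}$ once $n$ is large; a diagonal choice over $\varepsilon=1/j$ yields $\lambda_n\in\operatorname{spec}(M_n)$ with $\lambda_n\to0$.

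For the quantitative part of (\ref{main:it:b}), fix a non-zero eigenvalue $\lambda$ of ascent $p$ and a small circle $\Gamma$ enclosing $\lambda$ but no other point of $\operatorname{spec}(C_\tau)$ and not $0$, and set $\mathcal P=\tfrac{1}{2\pi i}\oint_\Gamma(\zeta I-C_\tau)^{-1}\,d\zeta$ and $\mathcal P_n=\tfrac{1}{2\pi i}\oint_\Gamma(\zeta I-C_{\tau;m,n})^{-1}\,d\zeta$. The resolvent identity, combined with the uniform bound on $(\zeta I-C_\tau)^{-1}$ over the fixed contour $\Gamma$, gives $\|\mathcal P-\mathcal P_n\|=O(\varepsilon_n)$, so for large $n$ one has $\operatorname{rank}\mathcal P_n=\operatorname{rank}\mathcal P=:k$ and exactly $k$ eigenvalues of $M_n$ (with algebraic multiplicity) inside $\Gamma$, which already produces a sequence $\lambda_n\to\lambda$. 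To extract the rate I would pass to the $k$-dimensional spectral subspaces: $\mathcal P$ restricts to an isomorphism $\operatorname{Ran}\mathcal P_n\to\operatorname{Ran}\mathcal P$ for large $n$, and transporting $C_{\tau;m,n}|_{\operatorname{Ran}\mathcal P_n}$ through it yields a $k\times k$ matrix $B_n$ with $\|B_n-A\|=O(\varepsilon_n)$, where $A=C_\tau|_{\operatorname{Ran}\mathcal P}$ has the single eigenvalue $\lambda$ and $(A-\lambda I)^p=0$. From the finite expansion $(\mu I-A)^{-1}=\sum_{j=0}^{p-1}(\mu-\lambda)^{-j-1}(A-\lambda I)^j$ one gets $\|(\mu I-A)^{-1}\|\leq c\,\abs{\mu-\lambda}^{-p}$ for $\abs{\mu-\lambda}\leq1$, and a Neumann series shows $\mu\notin\operatorname{spec}(B_n)$ as soon as $\abs{\mu-\lambda}>(c\|B_n-A\|)^{1/p}$; hence every eigenvalue $\lambda_n$ of $M_n$ inside $\Gamma$ satisfies $\abs{\lambda-\lambda_n}=O(\varepsilon_n^{1/p})=O\bigl((\sqrt{r_2/r_1})^{n/p}\bigr)$, which is the bound of (\ref{main:it:b}).

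Finally, for (\ref{main:it:c}) the normalisation $\sum_{\abs{k}<n}\abs{\xi_{n,k}}^2r^{2\abs{k}}=1$ says precisely $\|h_n\|_{H^2(A_r^c)}=1$, and by the intertwining of the first paragraph $h_n=\Theta_n(\xi_n)$ is a generalised eigenvector of $C_{\tau;m,n}$ for the eigenvalue $\lambda_n$ of (\ref{main:it:b}). Since $\lambda_n$ lies inside $\Gamma$ for large $n$ while the rest of $\operatorname{spec}(C_{\tau;m,n})$ lies outside $\Gamma$, the Riesz projection fixes $h_n$, i.e. $\mathcal P_nh_n=h_n$, so
\[ \|\mathcal Ph_n-h_n\|_{H^2(A_r^c)}=\|(\mathcal P-\mathcal P_n)h_n\|_{H^2(A_r^c)}\leq\|\mathcal P-\mathcal P_n\|=O(\varepsilon_n)=O\bigl((\sqrt{r_2/r_1})^{n}\bigr), \]
as required. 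The main obstacle I anticipate is the sharp eigenvalue estimate in (\ref{main:it:b}) when the ascent $p$ exceeds $1$: a resolvent bound on a shrinking contour degrades in that regime, so one is forced through the finite-dimensional compressions $B_n\to A$ and the precise order-$p$ blow-up of $(\mu I-A)^{-1}$ to recover the exponent $1/p$. The remaining parts are then essentially bookkeeping around the operator-norm convergence supplied by Theorem~\ref{thm:main} and the similarity between $C_{\tau;m,n}$ and $M^T_n$.
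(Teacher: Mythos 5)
Your proposal is correct and takes essentially the same route as the paper: the paper likewise reduces everything to Theorem~\ref{thm:main} together with the identifications $M_n=M_{\tau;m,n}$, $\operatorname{spec}(M_n)\setminus\{0\}=\operatorname{spec}(C_{\tau;m,n})\setminus\{0\}$ and the intertwining $J_nM_{\tau;m,n}^T=C_{\tau;m,n}J_n$, and then invokes standard spectral approximation theory (citing Ahues--Largillier--Limaye) exactly where you prove the Riesz-projection bounds and the ascent-$p$ exponent by hand. Note also that your rate $O\bigl((r_2/r_1)^{n/(2p)}\bigr)$ is the intended decaying form of the bound (the ratio in the printed statement is inverted, since $r_2<r_1$), so your estimate in particular implies the one stated.
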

\begin{proof}
We start with a few observations. First we note that the EDMD matrix $M_n$ defined above coincides with the matrix $M_{\tau;m,n}$ given in (\ref{eq:Mdef}), since the matrix $H$  occurring in the definition of $M$ is the identity in our case. 
Next we observe that by (\ref{eq:CsimM})  
\[ \operatorname{spec}(M_{\tau;m,n})=\operatorname{spec}(C_{\tau;m,n})\,.\]
Finally, defining the operator $J_n:\mathbb{C}^{2n-1}\to H^2(A_r^c)$ by 
\[ J_n: (\xi_{-n+1}, \ldots, \xi_{0},\ldots, \xi_{n-1})^T
\mapsto (z\mapsto \sum_{|k|<n}\xi_kz^k) \]
we see, after a short calculation, that  
\[ J_nM_{\tau;m,n}^T=C_{\tau;m,n}J_n, \]
which in turn implies that for any non-zero $\lambda\in \mathbb{C}$ and any $\nu\in \mathbb{N}$  
\[ J_n (\ker((\lambda I-M_{\tau;m,n}^T)^\nu) )= \ker((\lambda I-C_{\tau;m,n})^\nu)\,.\]
Thus, $J_n$ maps generalised eigenvectors of  $M_{\tau;m,n}^T$ corresponding to a non-zero eigenvalue $\lambda$ to generalised eigenvectors of $C_{\tau;m,n}$ corresponding to the eigenvalue $\lambda$. 

The assertions now follow from Theorem~\ref{thm:main} and standard perturbation theory. To be precise, assertions (\ref{main:it:a}) and (\ref{main:it:b})
follow from \cite[Corollaries 2.7, 2.13]{Ahues}; for the bound on the convergence, see 
Theorems 2.17, 2.18, and ensuing remarks in \cite{Ahues}. Finally, \cite[Proposition 2.9]{Ahues} yields
statement (\ref{main:it:c}).
\end{proof}

If the map $\tau$ is additionally assumed to preserve the unit circle, we obtain
the following result, which yields a practical numerical approximation scheme of the Koopman operator without having to compute the quantities $r_1$, $r_2$ and $r_3$.

\begin{theorem}
\label{thm:main3}
Let $\tau$ be an analytic expanding circle map with $\tau'_{\mathrm{max}}$ the maximal derivative of $\tau$ on $\mathbb{T}$.
Furthermore, for $n\in \mathbb{N}$ let $M=M_n$ be the $(2n-1) \times (2n-1)$ EDMD matrix constructed from the $2n-1$ observables and $m\in\mathbb{N}$ phase space points specified in Theorem~\ref{thm:main2}. 

Suppose now that $\delta$ is any positive real number with  
\[ \delta > 1+ \tau'_{\mathrm{max}}\,.\]
Then there is $r\in (0,1)$  such that $C_\tau$ is Hilbert-Schmidt on $H^2(A_r^c)$. 
Moreover, if for every $n \in \mathbb{N}$, the number of phase space points $m = m(n) \in \mathbb{N}$ is chosen such that
\begin{equation}\label{eq:m2n_cond3}
m \geq \delta n,
\end{equation}
then the following holds. 
  \begin{enumerate}
  \item \label{main3:it:a}
  Any convergent sequence $(\lambda_n)_{n\in\mathbb{N}}$ with
  $\lambda_n \in \operatorname{spec}(M_n)$ converges to a spectral point of $C_\tau$.
  \item \label{main3:it:b} Conversely, for any $\lambda \in \operatorname{spec}(C_\tau)$ there exist a sequence
  $(\lambda_n)_{n\in\mathbb{N}}$ with $\lambda_n \in \operatorname{spec}(M_n)$ such that
  $\lambda_n \to \lambda$ as $n\to \infty$. Moreover,
  \[|\lambda - \lambda_n| = O(e^{-an}) \quad \text{as } n\to \infty,\] for some $a>0$. 
    \item \label{main3:it:c} Suppose $\lambda \in \operatorname{spec}(C_\tau)$ is non-zero and $(\lambda_n)_{n\in \N}$ denotes the approximating sequence of eigenvalues of $M_n$ given in (\ref{main3:it:b}) above. If $\xi_n=(\xi_{n,-n+1}, \ldots, \xi_{n,0},\ldots, \xi_{n, n-1})^T \in \mathbb{C}^{2n-1}$ is a generalised eigenvector of the transpose $M^T_n$ of $M_n$ normalised so that 
  \[ \sum_{|k|<n}|\xi_{n,k}|^2r^{2|k|}=1,\]
 then setting 
  \[ h_n(z)=\sum_{\abs{k}<n}\xi_{n,k}z^k \quad (z\in \mathbb{C}) \]
  yields a sequence of Laurent polynomials $(h_n)_{n\in \mathbb{N}}$ with 
 \[ \| \mathcal{P}h_n-h_n\|_{H^2(A_r^c)} = O(e^{-bn}) \quad \text{as } n\to \infty,\]
where $\mathcal{P}$ is the spectral projection associated to the eigenvalue $\lambda$ of $C_\tau$ and some $b>0$. 
  \end{enumerate}
\end{theorem}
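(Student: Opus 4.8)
The plan is to derive Theorem~\ref{thm:main3} from Theorem~\ref{thm:main2} by producing, for the prescribed $\delta>1+\tau'_{\mathrm{max}}$, a triple of radii $0<r_3<r_2<r_1<1$ such that $\tau$ is holomorphically $(r_2,r_3)$-expansive on $A_{r_1}$ and $\tfrac{\log(r_2r_3)}{\log r_1}\le\delta$, and then quoting Theorem~\ref{thm:main2} with $r=\sqrt{r_1r_2}$. The one new ingredient is a monotonicity observation: \emph{if $\tau$ is holomorphically $(r_2,r_3)$-expansive on $A_{r_1}$ and $r_2\le s<r_1$, then $\tau$ is also holomorphically $(s,r_3)$-expansive on $A_{r_1}$}, because enlarging the middle radius only weakens the upper bounds $|\tau|<s$ in \eqref{eq:tauop}--\eqref{eq:tauor} and weakens the lower bounds $|\tau|>s^{-1}\le r_2^{-1}$ (one checks this for both orientations, and $r_3<r_2\le s<r_1$ keeps the radii ordered). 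This is exactly what lets us push $r_2$ arbitrarily close to $r_1$, and it is the reason the admissible constant is $1+\tau'_{\mathrm{max}}$ rather than the $\tau'_{\mathrm{min}}+\tau'_{\mathrm{max}}$ that one would obtain by using the radii of Lemma~\ref{lem:compexp} verbatim.

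Concretely, I would fix $\varepsilon>0$ with $\delta>1+\tau'_{\mathrm{max}}+\varepsilon$. By Lemma~\ref{lem:compexp} there is $\tilde r\in(0,1)$ with $\tau$ holomorphic on $A_{\tilde r}$, and for each $r_1\in(\tilde r,1)$ there are radii $r_2(r_1)<r_1$ and $r_3(r_1)$ with $\tau$ holomorphically $(r_2(r_1),r_3(r_1))$-expansive on $A_{r_1}$ and $\tfrac{\log r_3(r_1)}{\log r_1}\to\tau'_{\mathrm{max}}$ as $r_1\uparrow1$. Fix $r_1$ close enough to $1$ that $\tfrac{\log r_3(r_1)}{\log r_1}<\tau'_{\mathrm{max}}+\varepsilon$, hence $r_3(r_1)>r_1^{\tau'_{\mathrm{max}}+\varepsilon}$ and therefore $r_1^{\delta}/r_3(r_1)<r_1^{\delta-\tau'_{\mathrm{max}}-\varepsilon}<r_1$, using $\delta-\tau'_{\mathrm{max}}-\varepsilon>1$ and $r_1<1$. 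Then pick any $r_2\in\bigl(\max\{r_2(r_1),\ r_1^{\delta}/r_3(r_1)\},\ r_1\bigr)$, a nonempty interval precisely because both $r_2(r_1)<r_1$ and $r_1^{\delta}/r_3(r_1)<r_1$, and set $r_3=r_3(r_1)$. By the monotonicity observation $\tau$ is holomorphically $(r_2,r_3)$-expansive on $A_{r_1}$, and the inequality $r_2>r_1^{\delta}/r_3$, i.e.\ $r_2r_3>r_1^{\delta}$, rearranges (dividing by $\log r_1<0$) to $\tfrac{\log(r_2r_3)}{\log r_1}<\delta$.

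Finally, put $r=\sqrt{r_1r_2}$, which lies in $(r_2,r_1)$; Corollary~\ref{cor:HS} then shows $C_\tau$ is Hilbert--Schmidt on $H^2(A_r^c)$, which is the first assertion. Since $m\ge\delta n$ by \eqref{eq:m2n_cond3} and $\delta\ge\tfrac{\log(r_2r_3)}{\log r_1}$, the hypothesis $m\ge\tfrac{\log(r_2r_3)}{\log r_1}\,n$ of Theorem~\ref{thm:main2} holds for the triple $(r_1,r_2,r_3)$ and this $r$, and its conclusions transfer directly: part~(\ref{main:it:a}) of Theorem~\ref{thm:main2} gives~(\ref{main3:it:a}), while parts~(\ref{main:it:b}) and~(\ref{main:it:c}) give~(\ref{main3:it:b}) and~(\ref{main3:it:c}), the convergence rates there being positive powers of $r_2/r_1\in(0,1)$, hence exponentially small in $n$; explicitly one may take $a=\tfrac{1}{2p}\log(r_1/r_2)>0$ and $b=\tfrac12\log(r_1/r_2)>0$, where $p$ is the ascent of $\lambda$. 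The only step that requires genuine care is the calibration of $r_1$ together with the verification that the admissible interval for $r_2$ is nonempty \emph{exactly} when $\delta>1+\tau'_{\mathrm{max}}$; once the radii are in hand, everything else is a direct appeal to Lemma~\ref{lem:compexp}, Corollary~\ref{cor:HS} and Theorem~\ref{thm:main2}.
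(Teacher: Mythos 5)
Your proposal is correct, and it reaches the conclusion by a route that differs in an interesting way from the paper's. You modify the \emph{expansivity radii}: you note that the defining inequalities \eqref{eq:tauop}--\eqref{eq:tauor} are monotone in the middle radius, so $r_2$ may be enlarged towards $r_1$ while keeping $(r_2,r_3)$-expansivity, which drives $\tfrac{\log(r_2r_3)}{\log r_1}$ down towards $1+\tau'_{\mathrm{max}}$ and lets you invoke Theorem~\ref{thm:main2} verbatim as a black box (with $r=\sqrt{r_1r_2}$, and with the rate there read as a positive power of $r_2/r_1$, correcting what is evidently a typo in the stated exponent $(r_1/r_2)^{n/(2p)}$). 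The paper instead leaves the radii produced by Lemma~\ref{lem:compexp} untouched (so $\tfrac{\log r_2}{\log r_1}\approx\tau'_{\mathrm{min}}$) and gains the factor $1$ in place of $\tau'_{\mathrm{min}}$ by moving the \emph{auxiliary Hilbert-space radius} $r$ close to $r_1$ rather than taking $r=\sqrt{r_1r_2}$: choosing $r\in(r_2,r_1)$ with $\tfrac{\log r}{\log r_1}+\tfrac{\log r_3}{\log r_1}<\delta$ and a margin $\gamma>0$, it bounds $\|C_\tau-C_{\tau;m,n}\|_{S^2}$ directly via \eqref{eq:mainest1} by $K_1\max\{r_2/r,r/r_1\}^n+K_2r_1^{\gamma n}$ and then repeats the perturbation-theoretic conclusion ``as in the proof of Theorem~\ref{thm:main2}'' (it cannot cite that theorem literally, since its hypotheses with $r=\sqrt{r_1r_2}$ need not hold). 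Both arguments exploit the same underlying mechanism — the quadrature exponent only involves $\tfrac{\log(r\,r_3)}{\log r_1}$ resp.\ $\tfrac{\log(r_2r_3)}{\log r_1}$, and the relevant radius can be pushed towards $r_1$ — but yours buys a cleaner reduction to the already-proved theorem at the cost of the (easy) monotonicity observation, whereas the paper's keeps the canonical radii and accepts a slightly suboptimal Galerkin rate, which is harmless since only some exponential rate is claimed. Your calibration of $r_1$, the nonemptiness of the interval for $r_2$, and the verification $r_2r_3>r_1^\delta\Leftrightarrow\tfrac{\log(r_2r_3)}{\log r_1}<\delta$ are all sound.
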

\begin{proof}
Fix $\delta>1+\tau'_{\mathrm{max}}$. By Lemma~\ref{lem:compexp} 
there are $r_1$, $r_2$ and $r_3$ with $0<r_3<r_2<r_1<1$ such that 
$\tau$ is holomorphically $(r_2,r_3)$-expansive on $A_{r_1}$ and such that 
\[ \delta> 1 + \frac{\log(r_3)}{\log(r_1)}\,.\]
Next choose $r\in(r_2,r_1)$ such that 
\[ \delta >  \frac{\log(r)}{\log(r_1)} + \frac{\log(r_3)}{\log(r_1)}\,,\]
and finally $\gamma>0$ with 
\[ \delta >  \gamma+ \frac{\log(r)}{\log(r_1)} + \frac{\log(r_3)}{\log(r_1)}\,.\]
Thus, if $m\geq \delta n$ we have $r_1^m\leq r_1^{\gamma n}(rr_3)^n$ and so, by Theorem~\ref{thm:main}, 
\[ 
\|C_\tau-C_{\tau;m,n}\|_{S^2(H^2(A^c_r))}
\leq K_1 \max\set{\frac{r_2}{r},\frac{r}{r_1}}^n + K_2 r_{1}^{\gamma n} \leq K e^{-cn} \quad (n\in \mathbb{N})\,. 
\]
for some positive constants $K_1$, $K_2$ and $K$ and $c>0$. 

The assertions of the theorem now follow as in the proof of Theorem~\ref{thm:main2} from standard perturbation theory. 
\end{proof}

\section{Numerical illustration of convergence properties}\label{sec:3}

The purpose of this section is to demonstrate the practical usefulness of the bound $m \geq \delta n $,
which establishes the connection between the number of phase space points $m$ and the number of observables $n$.  
For any holomorphically expansive map $\tau$ on the annulus, there is an open interval $I_{r_1} = (r^{-}_1, r^{+}_1)$
with $r_1 \in I_{r_1}$ so that $\tau$ satisfies either 
(\ref{eq:tauop}) or (\ref{eq:tauor}), and hence our main Theorem \ref{thm:main2}
holds with $\delta$ being a function of $r_1$. Whilst this theorem holds for all $r_1\in I_{r_1}$ and the corresponding $\delta$,
for practical purposes it is of interest to know the smallest such $\delta$, which we denote by $\delta_{\mathrm{min}}$,
as it determines the minimal number $m$ of equidistant phase space points required for a given set of $n$ observables.

We shall next look at a particular class of nontrivial maps $\tau$, perform a numerical search of $r_1 \in I_{r_1}$ to find a suitable $\delta$, and
empirically observe convergence or non-convergence of the respective eigenvalue approximations obtained using the EDMD algorithm
to the true eigenvalues of $C_\tau$, depending on the chosen $\delta$.
We consider the family of maps
\begin{equation}\label{eq:mab_abc}
  \tau(z) = b_a(z) b_b(z) + c,
\end{equation}
where $b_\mu(z) = \frac{z-\mu}{1-\overline{\mu} z}$ is a M\"obius map for $\mu \in \mathbb{C}$ with $|\mu| < 1$, and $c \in \mathbb{C}$.

\subsection{Convergence for circle maps}
We shall first focus on the case $c=0$. In this case
these maps belong to the class of finite Blaschke products, which can be viewed as maps on $\mathbb{C}\cup \{\infty\}$,
giving rise to expanding maps when restricted to the circle. If additionally $a=b=0$ then $\tau$
reduces to the map $z\mapsto z^2$, which is the usual doubling map when considered on the unit circle. For non-zero
$a,b$ one can think of $\tau$ as a (strong) analytic perturbation of the doubling map. The main reason for considering this class of maps for numerical
experiments is that the entire spectrum of the Koopman operator $C_\tau$ considered on a suitable space
$H^2(A_r^c)$ is known, being determined by the fixed point properties of $\tau$ inside the unit disk \cite{SlBaJu_CNSNS20}. In particular, the
spectrum of $C_\tau$ is given by
\begin{equation}\label{eq:spec_explicit}
\operatorname{spec}(C_\tau) = \{0, 1\} \cup \{\lambda(z_0)^n\colon n\in \mathbb{N}\} \cup \{ \lambda(z_\infty)^n \colon n\in \mathbb{N}\},
\end{equation}
where $\lambda(z_0)$ and $\lambda(z_\infty)$ are the multipliers of the unique attracting fixed points
$z_0 \in D_r$ and $z_\infty \in D_{1/r}^\infty$. Moreover, the non-zero subleading eigenvalues come as complex-conjugate pairs, as in
this case we have $\lambda(z_\infty) = \overline{\lambda(z_0)}$.

Having access to the eigenvalues of $C_\tau$ allows us to compare the distance
between exact and approximated eigenvalues. For this, we measure the distance between the first
subleading eigenvalue $\lambda_2 = \lambda(z_0)$ and the corresponding eigenvalue $\lambda^{nm}_2$ of $C_{\tau;m, n}$ represented
by the $(2n-1)\times (2n-1)$ matrix $M_n$ computed in the EDMD algorithm. 
We choose $a=b=0.33\exp(i \pi/25)$ for the map \eqref{eq:mab_abc}, the same setting as considered in \cite{SlBaJu_CNSNS20}.
The results are presented in Figure \ref{fig:fig_nm}, where for each $m\in [2, 400]$ and $n\in [2, 100]$ we plot the error $|\lambda^{nm}_2 - \lambda_2|$
in log scale, with the darkness of the colour corresponding to the size of the error.

\begin{figure}
  \includegraphics[width=0.75\textwidth]{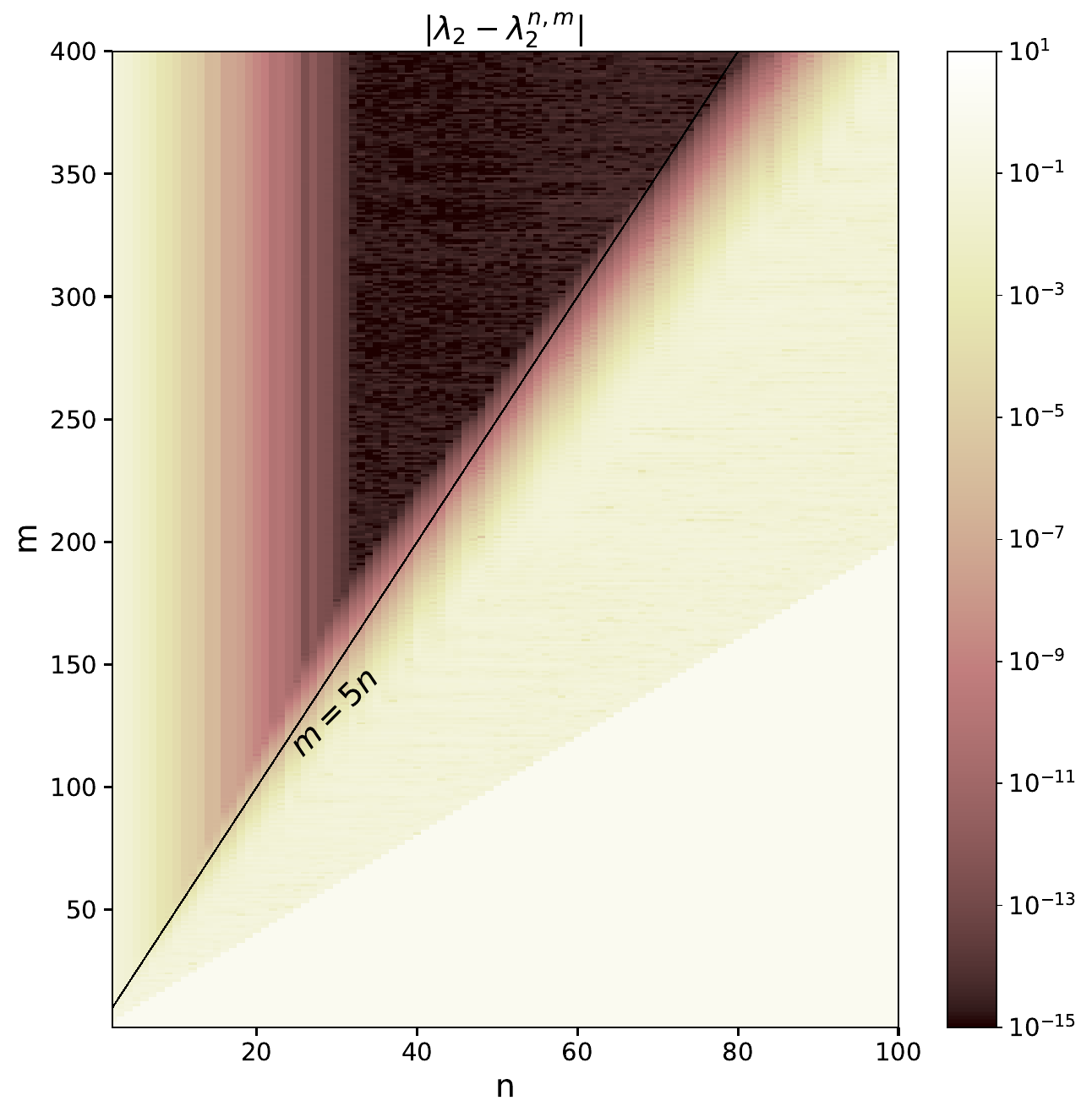}
  \caption{Absolute error of the first subleading eigenvalue (plotted in log scale) for the circle map in (\ref{eq:mab_abc}) with 
  $a=b=0.33\exp(i \pi/25)$ and $c=0$ obtained from the
  EDMD matrix for different values of $m$ and $n$. The black line in the middle is $m = \delta n$
  for $\delta = 5$ nicely indicates our theoretical bound, that is choosing $m \geq 5n$,
  we are seeing exponential convergence in $n$ of the first subleading eigenvalue.}
  \label{fig:fig_nm}
\end{figure}

Using Theorem~\ref{thm:intro}, we have computed an admissible value for $\delta$
as $\tau'_{\mathrm{min}}+\tau'_{\mathrm{max}} \approx 4.98$, which also coincides with $\delta_{\mathrm{min}}$ obtained using a numerical search of $r_1 \in I_{r_1}$ as described above.
The line $m = 5 n$ depicted in Figure \ref{fig:fig_nm} aligns well with the (numerically observed) boundary between convergence and non-convergence.
Here the value $\delta = 5$ was chosen merely for convenience, as this appears to be a sharp bound for integer-valued $\delta$.

In Figure \ref{fig:fig_n_different_m} we plot the error
$|\lambda^{nm}_2 - \lambda_2|$ as a function of $n$ for different regimes of $m = m(n)$.
Choosing a fixed large value of $m$, for example $m = 300$, initially results in exponentially fast convergence in $n$,
up to numerical precision. The error however deteriorates significantly as we increase $n$, here $n > 55$, while holding
$m$ constant, as there are too few phase space points to resolve highly oscillatory modes.
Choosing $m$ as a linear function of $n$, that is $m = \delta n$, we observe for $\delta = 5 \approx \delta_{\mathrm{min}}$ exponential
convergence of the error, albeit slower than the initial convergence for a fixed regime of $m=300$. Choosing $m = 6 n > \delta_{\mathrm{min}} n$,
we observe fast exponential convergence with the same (initial) rate as for a fixed regime $m=300$, which does not deteriorate.
Choosing $\delta$ too small, here $m = 4n < \delta_{\mathrm{min}} n $ we observe no convergence of the error. 
To summarize, the threshold value provided by Theorem~\ref{thm:intro} is not only sufficient for guaranteeing convergence,
but appears to be close to $\delta_{\mathrm{min}}$, that is the boundary between convergence and non-convergence for this example.

\begin{figure}
  \includegraphics[width=0.75\textwidth]{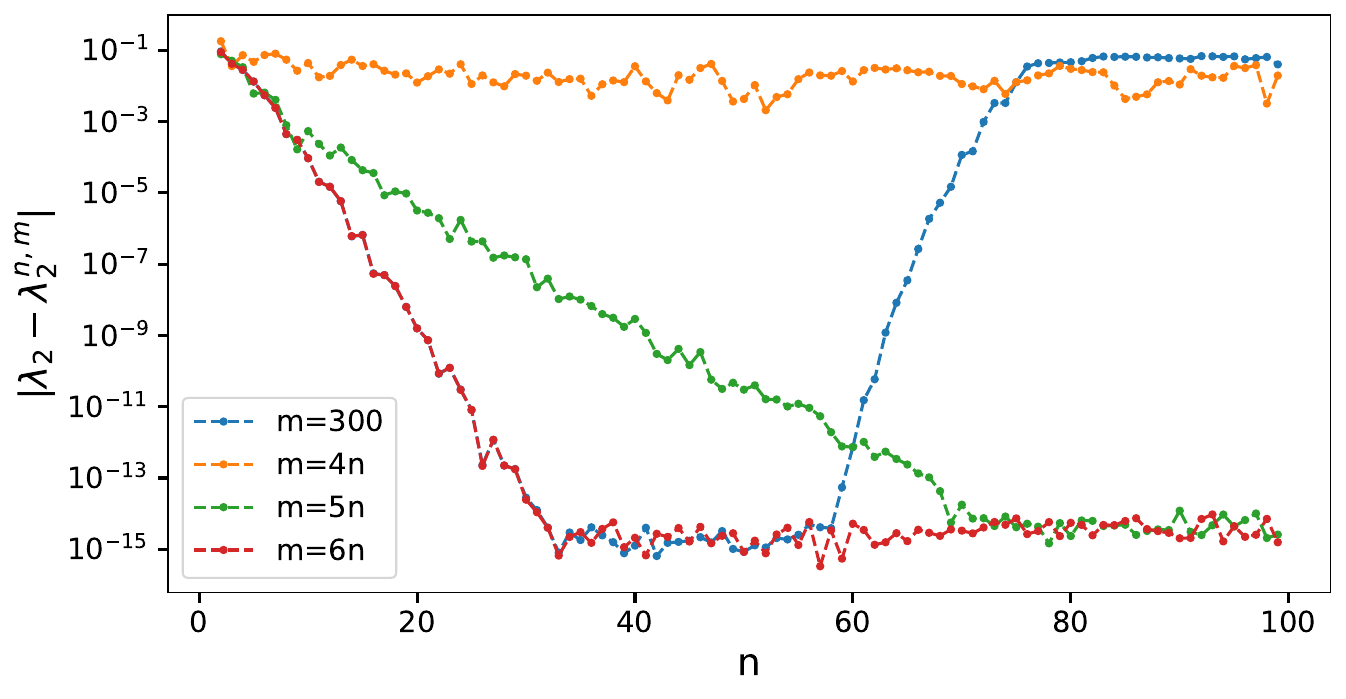}
  \caption{Absolute error of the first subleading eigenvalue (plotted in log scale) for the circle map in (\ref{eq:mab_abc}) with 
  $a=b=0.33\exp(i \pi/25)$ and $c=0$
  obtained from the EDMD matrix, plotted as a function of $n$ for different regimes of choosing $m$.}
  \label{fig:fig_n_different_m}
\end{figure}

\subsection{Convergence for maps not preserving the unit circle}

Theorem~\ref{thm:main2} also applies to holomorphically expansive maps on the annulus which do not necessarily preserve
the unit circle $\mathbb{T}$. To demonstrate our bounds in this setting we consider maps $\tau$ in \eqref{eq:mab_abc} with $c\neq 0$, in which case $\tau$ no longer preserves the unit circle.
We shall choose $a,b$ and $c$ in such a way that $\tau$ is holomorphically expansive on an annulus containing the unit circle,
and whose Julia set is a quasicircle\footnote{
If $a=b=0$ then $\tau(z) = z^2 + c$ is the well-known quadratic family.
If $c \in \{z\in \mathbb{C} \colon |1-\sqrt{1-4z}| < 1\}$ then $\tau$ is hyperbolic and its Julia set is
a quasicircle, that is, the image of a circle under a quasi-conformal map of $\hat{\mathbb{C}}$.}.
Using the same arguments as in the proof of \cite[Theorem 5.4(b)]{BaJuSl_AIHP17},
one can show that the spectrum of $C_\tau$ considered
on a suitable space $H^2(A_r^c)$ is again given by \eqref{eq:spec_explicit}, that is, given by the powers of the multipliers of the two
fixed points in the two respective Fatou components of $\tau$ (however the two multipliers are no longer complex conjugates of each other).

In the same vein as before, in Figure \ref{fig:quasicircle}(right) we show the
approximation error $|\lambda^{nm}_2 - \lambda_2|$ for $n \in [2, 100]$ and different choices of $m = m(n)$. We note that since $\tau$ does not preserve the unit circle, the chosen phase space points are typically in its Fatou set, which tends to make the computations numerically less stable, as the size of the matrix entries grows with $n$. To obtain good approximations, the use of high-precision computation is therefore required in this case. As this involves substantially higher computational cost, we limit ourselves to a sparse grid of $n$ values in the plots provided. Qualitatively, we obtain a very similar picture to the previous case, with a value of $\delta \approx 4.18$ (obtained as before, using a numerical search of $r_1 \in I_{r_1}$) again appearing to be close to $\delta_{\mathrm{min}}$.
We note that the approximation error in Figure \ref{fig:quasicircle} can not improve below $\approx 10^{-16}$,
as our reference value $\lambda_2$ is only computed to this regular floating point precision.

\begin{figure}
  \includegraphics[width=0.3\textwidth]{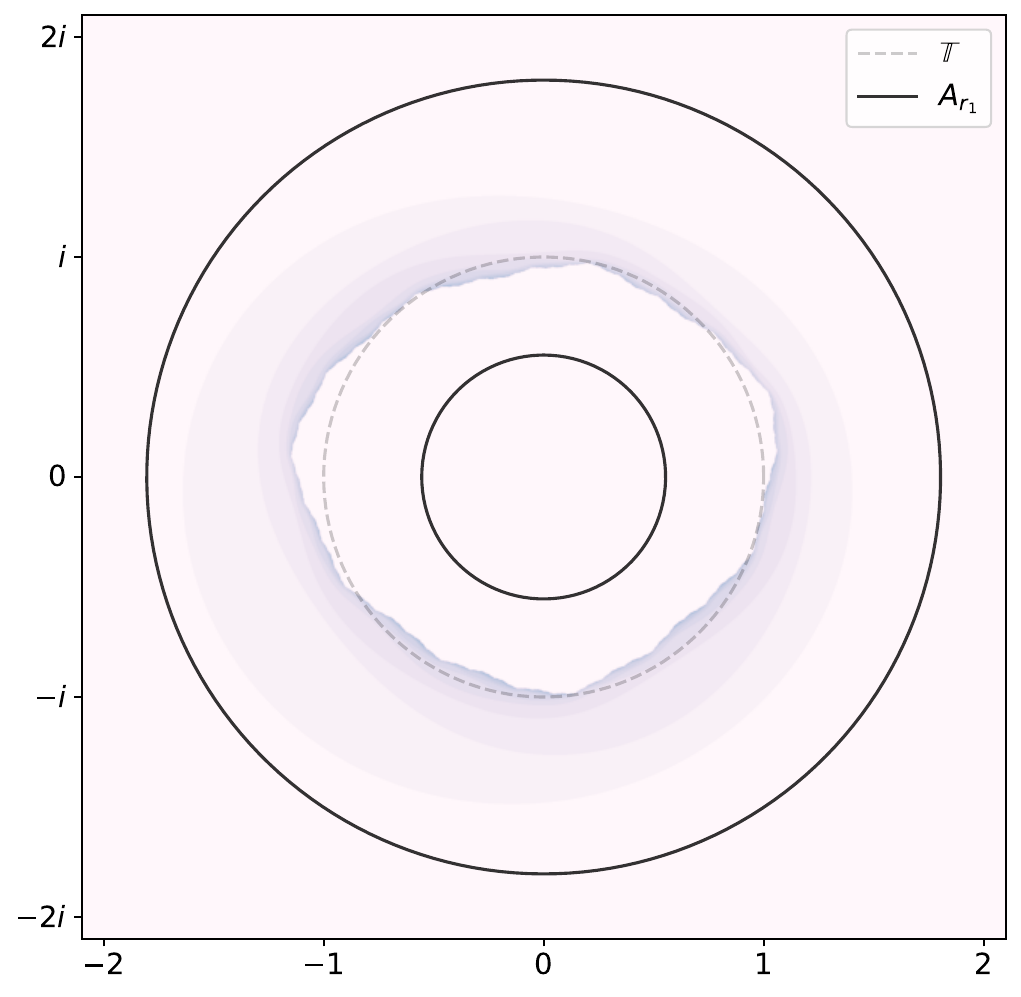}
  \includegraphics[width=0.57\textwidth]{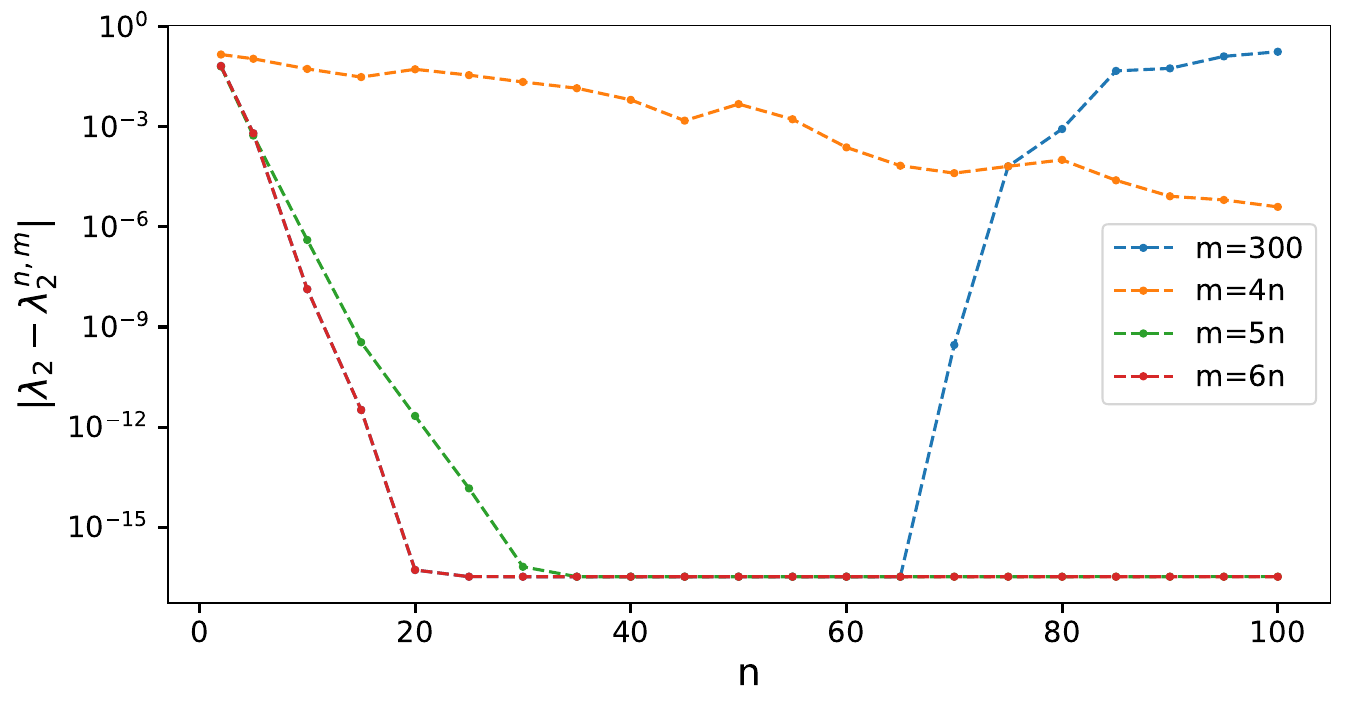}
  \caption{(Left) The Julia set $\mathcal{J}_\tau$ for $\tau$ in \eqref{eq:mab_abc} for
  $a=0.25i+0.11, b=0.078$ and $c=0.164$. The annulus $A_{r_1}$ containing $\mathcal{J}_\tau$
  for $r_1\approx 0.554$ satisfies condition (\ref{eq:tauop}) with corresponding $\delta \approx 4.18$. (Right) EDMD approximation error of the first subleading eigenvalue, as a function of $n$
  for different regimes of $m = m(n)$.}
  \label{fig:quasicircle}
\end{figure}

\section{Acknowledgments}
All authors gratefully acknowledge the support for the research presented in this
article by the EPSRC grant EP/RO12008/1. J.S.~was partly supported by the ERC-Advanced
Grant 833802-Resonances and W.J.~acknowledges funding by the Deutsche Forschungsgemeinschaft (DFG, German Research
Foundation) SFB 1270/2 - 299150580.

\section{Declaration}
The authors have no relevant financial or non-financial interests to disclose.

\bibliographystyle{auth_tit}

\end{document}